\newcommand{\N}{\mathbb{N}}
\newcommand{\Z}{\mathbb{Z}}
\newcommand{\R}{\mathbb{R}}
\newcommand{\sk}{\mathcal{S}^k}
\newcommand{\car}{\mathbb{S}}
\newcommand{\gas}{\mathbb{G}}
\newcommand{\eps}{\varepsilon}
\newcommand{\pip}{\varphi}
\newcommand{\ma}{\mu_\alpha}
\newcommand{\magas}{\mu'_\alpha}
\newcommand{\wa}{\omega_\alpha}
\newcommand{\rad}{\text{rad}}
\newcommand{\Lip}{\text{Lip}\,}
\def\vint_#1{\mathchoice
          {\mathop{\vrule width 6pt height 3 pt depth -2.5pt
                  \kern -8pt \intop}\nolimits_{#1}}%
          {\mathop{\vrule width 5pt height 3 pt depth -2.6pt
                  \kern -6pt \intop}\nolimits_{#1}}%
          {\mathop{\vrule width 5pt height 3 pt depth -2.6pt
                  \kern -6pt \intop}\nolimits_{#1}}%
          {\mathop{\vrule width 5pt height 3 pt depth -2.6pt
                  \kern -6pt \intop}\nolimits_{#1}}}
\newcommand{\norm}[1]{\left\lVert#1\right\rVert}
\DeclareMathOperator{\diam}{diam}
\DeclareMathOperator{\dist}{dist}
\newtheorem{thm}{Theorem}[section]
\newtheorem{lemma}[thm]{Lemma}
\theoremstyle{definition}
\newtheorem{defn}[thm]{Definition}
\newtheorem*{notn}{Notation}
\begin{document}

\title[Traces and extensions for weighted Sobolev spaces]
{Traces and extensions of certain weighted Sobolev spaces on $\mathbb{R}^n$ and Besov functions on Ahlfors regular compact 
subsets of $\mathbb{R}^n$}
\author{Jeff Lindquist and Nageswari Shanmugalingam}
\thanks{The second author was partially supported by grant DMS~\#1800161 from the National Science Foundation 
(U.S.A.). Both authors also
acknowledge a great debt to the custodial staff at the University of Cincinnati who maintained the facilities during these
difficult times; this debt can never be adequately repaid.}

\dedicatory{Dedicated with gratitude to Professor Pekka Koskela on his 59th birthday}

\begin{abstract}
The focus of this paper is on Ahlfors $Q$-regular compact sets $E\subset\mathbb{R}^n$ such that, for each
$Q-2<\alpha\le 0$, the weighted measure $\mu_{\alpha}$ given by integrating the density $\omega(x)=\text{dist}(x, E)^\alpha$
yields a Muckenhoupt $\mathcal{A}_p$-weight in a ball $B$ containing $E$. For such sets $E$ we show 
the existence of a bounded linear trace operator acting from $W^{1,p}(B,\mu_\alpha)$ to 
$B^\theta_{p,p}(E, \mathcal{H}^Q\vert_E)$ when $0<\theta<1-\tfrac{\alpha+n-Q}{p}$, and the
existence of a bounded linear extension operator from $B^\theta_{p,p}(E, \mathcal{H}^Q\vert_E)$
to $W^{1,p}(B, \mu_\alpha)$ when $1-\tfrac{\alpha+n-Q}{p}\le \theta<1$.
We illustrate these results with $E$ as the Sierpi\'nski carpet, the Sierpi\'nski 
gasket, and the von Koch snowflake.  
\end{abstract}

\maketitle

\noindent
    {\small \emph{Key words and phrases}: Besov space, weighted Sobolev space, Ahlfors regular sets,
Sierpi\'nski carpet, gasket, von Koch snowflake, trace, extension.
}

\medskip
\noindent
{\small \emph{Mathematics Subject Classification} (2020): Primary: 46E35; Secondary: 31E05.
}


\vskip 1cm

\section{Introduction}

The Sobolev classes $W^{1,p}(\Omega)$, with $\Omega$ a Euclidean domain, are 
associated with (potentially degenerate)
elliptic partial differential equations related to a strongly local Dirichlet form.
In considering Dirichlet boundary value problems on a Euclidean domain in $\R^n$ related to elliptic differential operators, solutions are 
known to exist if the prescribed boundary datum, namely a function that is given on the boundary of the domain, 
arises as the trace of a Sobolev function that is defined on the domain. The works of Jonsson and Wallin~\cite{JW, JW2} identify
certain Besov spaces of functions on a compact $d$-set as traces on that set of Sobolev functions on $\R^n$. Here, a 
set is a $d$-set if it is Ahlfors $d$-regular, namely, $\mathcal{H}^d(B(x,r))\simeq r^d$ whenever $x$ is a point in that set and
$r>0$ is no larger than the diameter of that set. If $\Omega$ is a bounded Lipschitz domain in $\mathbb{R}^n$, then its boundary
is an $(n-1)$-set. It was shown in~\cite[Page~228]{JW} that the trace of $W^{1,p}(\R^n)$ in a $d$-set $E\subset \mathbb{R}^n$ is
the Besov space $B^{1-(n-d)/p}_{p,p}(E)$. Thus for each $p>1$ a specific value of $\theta=1-\tfrac{n-d}{p}$ was chosen
amongst all possible values of $0<\theta<1$ for which the Besov space $B^\theta_{p,p}(E)$ is identified with the Sobolev
space $W^{1,p}(\mathbb{R}^n)$.

On the other hand,~\cite[Theorem~1.1]{B2S} tells us that each compact doubling metric measure space $Z$ is the boundary
of a uniform space (a locally compact non-complete metric space that is a uniform domain in its completion) $X_\eps$ and
that for each choice of $p>1$ and $0<\theta<1$ there is a choice of a doubling measure $\mu_\beta$,
$\beta=\beta(\theta)$, on the uniform space such that $B^\theta_{p,p}(Z)$ is the trace space of the 
Newton-Sobolev class $N^{1,p}(X_\eps,\mu_\beta)$. The measures $\mu_\beta$ are obtained as weighted measures,
that is, $d\mu_\beta(x)=\omega_\beta(x) \, d\mu(x)$ for some underlying measure $\mu$ on $X_\eps$. With this perspective
in mind, it is then natural to ask whether, given $p>1$ and $0<\theta<1$, there is a weighted measure $\mu_\alpha$
on $\R^n$, with $\alpha$ depending perhaps on $\theta$ and $p$, such that $B^\theta_{p,p}(E,\nu)$ is the trace space
of the weighted Sobolev class $W^{1,p}(\R^n, \mu_\alpha)$. Here $\nu$ is the Hausdorff measure on $E$
of dimension $\text{dim}_H(E)$.
There is evidence for this to be plausible, see for example~\cite{Maly} where $E$ was  considered to be the boundary
of a uniform domain. 
The paper~\cite[Theorem~1.5]{Barton} identified each $B^\theta_{p,p}(\partial \Omega)$, $1<p<\infty$, $0<\theta<1$, with certain 
weighted Sobolev classes of functions on the Lipschitz domain $\Omega\subset\R^n$, while
the papers~\cite{DFM1, DFM2}
consider the case of $p=2$ with $\R^n\setminus E$ a domain satisfying a one-sided NTA domain condition. 
In this note we do not assume that $\R^n\setminus E$ is a domain (the example of $E$ being a Sierpi\'nski
carpet does not have its complement in $\R^2$ be a domain) nor do we restrict ourselves to the case of $p=2$.
Indeed, when $n\ge 3$, in
considering $\R^n$, the choice of $p=n$ is related to the quasiconformal geometry of $\R^n\setminus E$.
In many cases, including the three examples considered here as well as the setting considered in
\cite{Maly, DFM1, DFM2}, the weight $\omega_\beta(x)\simeq\dist(x,E)^\beta$.

In this paper we expand on this question and study the following setting.
With $0<Q<n$, let $E\subset \R^n$ be an Ahlfors $Q$-regular compact set
with $\diam(E)\le 1$, and $B$ be a ball in $\R^n$ such that $E\subset\tfrac12 B$. We also assume that 
for each $\alpha\le 0$ there is a measure $\mu_\alpha$ on $B$ such that whenever $\alpha+n-Q>0$
and $x\in E$, and $0<r<2$ such that $r>\dist(x,E)/9$, the comparison $\mu_\alpha(B(x,r))\simeq r^{n+\alpha}$
holds. We also assume that the ball $B$, equipped with the Euclidean metric $d$ and the measure $\mu_\alpha$,
is doubling. Furthermore, we assume that for each $p>1$ there exists such $\alpha$ so that $\ma$
supports a $p$-Poincar\'e inequality. These assumptions are not as restrictive as one might think. From 
Theorem~1.1 of~\cite{DILTV}, we know that the measure $\mu_\alpha$ given by $d\mu_\alpha=\dist(x,E)^\alpha dm(x)$
with $m$ the $n$-dimensional Lebesgue measure on $\R^n$ is a Muckenhoupt $\mathcal{A}_p$-weight and hence
is doubling and supports a $p$-Poincar\'e inequality for all $1<p<\infty$. Moreover, from the proof of
Theorem~3.4 of~\cite{DILTV} we also automatically have $\mu_\alpha(B(x,r))\lesssim r^{n+\alpha}$; thus
the only requirement we add to this discussion is that $r^{n+\alpha}\lesssim\mu_\alpha(B(x,r))$.

We prove the following two theorems in this paper.
In what follows, $\nu=\mathcal{H}^Q\vert_E$ and for each real number $\alpha$ there is a Borel regular measure
$\ma$ on $\R^n$ that is absolutely continuous with respect to the Lebesgue measure $m$ and satisfying
$\mu_\alpha(B(x,r))\simeq r^{n+\alpha}$ when $x\in E$ and $0<r\le 10$.

\begin{thm}\label{thm:main-trace}
Let $p>1$ and  $0<\theta<1$ such that $p\theta<1$. 
Let $\alpha\le 0$ such that $\alpha+n-Q>0$ and
$\theta<1-\tfrac{\alpha+n-Q}{p}$ such that $\ma$ is doubling and supports a $p$-Poincar\'e inequality.
Then there is a bounded linear trace operator 
\[
T:N^{1,p}(B,\mu_\alpha)\to B^\theta_{p,p}(E,\nu)
\] 
such that $Tu=u\vert_E$ when $u$ is a Lipschitz function on $B$.
\end{thm}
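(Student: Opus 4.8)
The plan is to define the trace through $\mu_\alpha$-ball averages and then to prove the a priori Besov estimate first for Lipschitz functions, extending it to all of $N^{1,p}(B,\mu_\alpha)$ by density. For $u\in N^{1,p}(B,\mu_\alpha)$, $x\in E$ and $r>0$ write $u_{B(x,r)}=\tfrac{1}{\mu_\alpha(B(x,r))}\int_{B(x,r)}u\,d\mu_\alpha$, and set $Tu(x)=\lim_{r\to 0}u_{B(x,r)}$ whenever this limit exists. Since $\mu_\alpha$ is doubling and supports a $p$-Poincar\'e inequality, Lipschitz functions are dense in $N^{1,p}(B,\mu_\alpha)$ and $Tu=u\vert_E$ is unambiguous on them; linearity of $T$ is immediate, so it suffices to establish
\[
\norm{u\vert_E}_{B^\theta_{p,p}(E,\nu)}\lesssim \norm{u}_{N^{1,p}(B,\mu_\alpha)}
\]
for Lipschitz $u$ and then define $T$ by continuous extension. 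I would also record the equivalent form of the seminorm, $[v]^p_{B^\theta_{p,p}(E,\nu)}\simeq \int_E\int_E\frac{|v(x)-v(y)|^p}{|x-y|^{Q+\theta p}}\,d\nu(y)\,d\nu(x)$, and organize this double integral over the dyadic annuli $|x-y|\simeq 2^{-k}$.

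The core pointwise input is a telescoping estimate driven by the Poincar\'e inequality. For $x\in E$ and $r_j=2^{-j}$, the $p$-Poincar\'e inequality and doubling give $|u_{B(x,r_{j+1})}-u_{B(x,r_j)}|\lesssim r_j\,\big(\tfrac{1}{\mu_\alpha(B(x,\lambda r_j))}\int_{B(x,\lambda r_j)}g_u^p\,d\mu_\alpha\big)^{1/p}=:a_j(x)$, where $g_u$ is the minimal $p$-upper gradient. Summing yields $|Tu(x)-u_{B(x,r_k)}|\lesssim \sum_{j\ge k}a_j(x)$, which in particular forces convergence of the defining limit wherever the series converges. For $x,y\in E$ with $|x-y|\simeq 2^{-k}$, the balls $B(x,r_k)$ and $B(y,r_k)$ lie in a common ball of radius $\simeq 2^{-k}$, so a further application of Poincar\'e (using $\mu_\alpha(B(y,r_k))\simeq \mu_\alpha(B(x,2r_k))$ for $y\in E$) controls $|u_{B(x,r_k)}-u_{B(y,r_k)}|$ by $a_k(x)+a_k(y)$ up to a constant. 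Hence $|Tu(x)-Tu(y)|\lesssim \sum_{j\ge k}(a_j(x)+a_j(y))$, while an analogous bound $|Tu(x)|\lesssim |u_{B(x,r_0)}|+\sum_{j\ge 0}a_j(x)$ supplies the $L^p(\nu)$ part of the norm.

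Inserting these into the dyadic seminorm and using $\nu(B(x,2^{-k}))\simeq 2^{-kQ}$ reduces the estimate to $\sum_k 2^{k\theta p}\int_E\big(\sum_{j\ge k}a_j(x)\big)^p d\nu(x)$; a Hölder step $\sum_{j\ge k}a_j=\sum_{j\ge k}(2^{j\eps}a_j)2^{-j\eps}$ with $0<\eps<\theta$, together with summation of the resulting geometric factors (valid since $\theta p>0$), dominates this by $\sum_j 2^{j\theta p}\int_E a_j(x)^p\,d\nu(x)$. The remaining ingredient is the two-measure Fubini computation: for $x\in E$ one has $\mu_\alpha(B(x,\lambda r_j))\simeq 2^{-j(n+\alpha)}$ while $\nu(\{x\in E:|x-z|<\lambda r_j\})\lesssim 2^{-jQ}$, giving
\[
\int_E a_j(x)^p\,d\nu(x)\lesssim 2^{-jp}\,2^{j(n+\alpha-Q)}\int_{\{\dist(z,E)\lesssim 2^{-j}\}}g_u^p\,d\mu_\alpha.
\]
Collecting exponents, the total is bounded by $\sum_j 2^{j[\theta p-p+(n+\alpha-Q)]}\int_{\{\dist(z,E)\lesssim 2^{-j}\}}g_u^p\,d\mu_\alpha$, and since $\theta p-p+(n+\alpha-Q)<0$ precisely when $\theta<1-\tfrac{\alpha+n-Q}{p}$, summing the geometric series in $j$ (which also absorbs the nested overlap of the regions $\{\dist(z,E)\lesssim 2^{-j}\}$) yields $[u\vert_E]^p_{B^\theta_{p,p}}\lesssim \norm{g_u}_{L^p(\mu_\alpha)}^p$. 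Together with the $L^p(\nu)$ bound this gives the desired estimate and hence the bounded linear $T$.

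I expect the main obstacle to be the bookkeeping of the final paragraph: executing the interchange of the tail sum over scales and summing the resulting geometric series is exactly where the hypothesis $\theta<1-\tfrac{\alpha+n-Q}{p}$ is genuinely used and cannot be relaxed, and one must simultaneously track the controlled overlap of the tubular regions $\{\dist(z,E)\lesssim 2^{-j}\}$ so that $g_u^p$ is integrated against $d\mu_\alpha$ a summable number of times. A secondary technical point is verifying that $u_{B(x,r)}$ converges for $\nu$-a.e.\ $x\in E$ for general $u\in N^{1,p}(B,\mu_\alpha)$, so that $Tu$ really is the pointwise trace rather than merely an abstract density extension; this follows from the same telescoping estimate and a weak-type maximal function bound, but again hinges on the comparison $\mu_\alpha(B(x,r))\simeq r^{n+\alpha}$ against $\nu(B(x,r))\simeq r^Q$ to transfer the gradient integral on $B$ to the exceptional set on $E$, with the constraint $p\theta<1$ entering in the accompanying $L^p(\nu)$ control.
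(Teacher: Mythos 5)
Your argument is correct, but it takes a genuinely different route through the key estimate than the paper does. The paper also telescopes along a chain of balls centered at $x$ and $y$, but it then converts the tail sum into a pointwise bound $|u(x)-u(y)|\lesssim d(x,y)^{1-\gamma/q}\bigl(M_\gamma g_u^q(x)^{1/q}+M_\gamma g_u^q(y)^{1/q}\bigr)$ involving a fractional Hardy--Littlewood maximal function $M_\gamma$ taken with respect to $\mu_\alpha$, with $\gamma=\alpha+n-Q$; the Besov seminorm is then controlled by integrating the kernel $d(x,y)^{p-\gamma p/q-\theta p-Q}$ over dyadic annuli and invoking a two-measure weak-type bound for $M_\gamma$ (from $L^1(\mu_\alpha)$ to weak-$L^1(\nu)$) together with a Marcinkiewicz-type argument to get the strong bound $\int_E M_\gamma(g_u^q)^{p/q}\,d\nu\lesssim\int_B g_u^p\,d\mu_\alpha$. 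This forces the paper to work with a $q$-Poincar\'e inequality for some $q<p$, obtained via Keith--Zhong self-improvement, since the strong-type maximal bound fails at the endpoint $q=p$. You instead keep the tail sum $\sum_{j\ge k}a_j$ intact, redistribute the scales with the weighted H\"older step, and evaluate $\int_E a_j^p\,d\nu$ scale by scale via Fubini against the codimension relation $\mu_\alpha(B(x,r))\simeq r^{n+\alpha}$ versus $\nu(B(x,r))\simeq r^Q$; this avoids maximal functions altogether and lets you use the $p$-Poincar\'e inequality directly. Both proofs spend the hypothesis $\theta<1-\tfrac{\alpha+n-Q}{p}$ in the same place, namely the convergence of the geometric series over scales. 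What the paper's route buys is a reusable pointwise inequality and maximal-function lemmas of independent interest; what yours buys is the elimination of the self-improvement step and of the two-measure maximal machinery. One small correction: the hypothesis $p\theta<1$ does not enter the $L^p(\nu)$ control as you suggest (that step only needs $\alpha+n-Q<p$, which follows from $\theta>0$ and $\theta<1-\tfrac{\alpha+n-Q}{p}$); in the paper it serves only to guarantee that an admissible $\alpha$ exists at all.
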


\begin{thm}\label{thm:main-extend}
Let $p>1$ and $0<\theta<1$, and let $\alpha\le 0$ such that
$\alpha+n-Q>0$ and $\theta\ge 1-\tfrac{\alpha+n-Q}{p}$. Then there is a bounded
linear extension operator
\[
S:B^\theta_{p,p}(E,\nu)\to N^{1,p}(B,\mu_\alpha)
\]
such that if $u$ is a Lipschitz function on $E$, then $Su$ is a Lipschitz function on $B$ with
$u=Su\vert_E$.
\end{thm}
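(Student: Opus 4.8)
The plan is to construct $S$ by a Whitney-type extension, verify its boundedness first on Lipschitz data, and then extend to all of $B^\theta_{p,p}(E,\nu)$ by density and completeness. First I would fix a Whitney decomposition $\{Q_i\}$ of $B\setminus E$ into dyadic cubes with $\diam(Q_i)\simeq\dist(Q_i,E)=:d_i$ and bounded overlap of suitable dilates, together with a subordinate Lipschitz partition of unity $\{\pip_i\}$ satisfying $\sum_i\pip_i\equiv 1$ on $B\setminus E$ and $|\nabla\pip_i|\lesssim d_i^{-1}$. For each $i$ I would choose a nearest point $p_i\in E$ and set $u_i=\vint_{B_i}u\,d\nu$, where $B_i=B(p_i,d_i)\cap E$ and $\nu(B_i)\simeq d_i^Q$ by Ahlfors regularity. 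The extension is then
\[
Su=\sum_i\pip_i\,u_i\quad\text{on }B\setminus E,\qquad Su=u\quad\text{on }E.
\]
For Lipschitz $u$ one checks directly that the $u_i$ vary by at most $\lesssim(\Lip u)\,d_i$ between neighbouring cubes, so that $Su$ is Lipschitz on $B\setminus E$ with $|\nabla(Su)|\lesssim\Lip u$, and that $Su(x)\to u(z)$ as $x\to z\in E$; hence $Su$ is Lipschitz on all of $B$ with $Su\vert_E=u$.

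Second, I would establish the two quantitative bounds, organising all sums by dyadic scale $d_i\simeq 2^{-m}$. Since $\dist(\cdot,E)\simeq 2^{-m}$ on such a cube, $\ma(Q_i)\simeq 2^{-m(n+\alpha)}$, while the number of scale-$2^{-m}$ Whitney cubes is $\simeq 2^{mQ}$. For the $L^p$-bound, Jensen gives $|u_i|^p\le\vint_{B_i}|u|^p\,d\nu\simeq 2^{mQ}\int_{B_i}|u|^p\,d\nu$, and summing with bounded overlap yields
\[
\int_B|Su|^p\,d\ma\lesssim\sum_m 2^{-m(n+\alpha)}2^{mQ}\norm{u}_{L^p(\nu)}^p=\norm{u}_{L^p(\nu)}^p\sum_m 2^{-m(n+\alpha-Q)},
\]
which converges precisely because $\alpha+n-Q>0$. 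For the gradient, using $\sum_i\nabla\pip_i=0$ I would subtract the value $u_j$ of the cube containing $x$ to get $|\nabla(Su)(x)|\lesssim d_j^{-1}\sum_{i\sim j}|u_i-u_j|$, whence
\[
\int_{Q_j}|\nabla(Su)|^p\,d\ma\lesssim d_j^{n+\alpha-p}\sum_{i\sim j}|u_i-u_j|^p.
\]
The comparison $|u_i-u_j|^p\lesssim\vint_{B_i}\vint_{B_j}|u(x)-u(y)|^p\,d\nu\,d\nu$ for neighbouring balls then rewrites the total gradient energy in terms of the double integrals $\int\!\!\int_{|x-y|\lesssim 2^{-m}}|u(x)-u(y)|^p\,d\nu\,d\nu$.

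The heart of the argument, and the step I expect to be the main obstacle, is the scale-by-scale summation that forces the hypothesis $\theta\ge 1-\tfrac{\alpha+n-Q}{p}$. Writing $J_\ell=\int\!\!\int_{2^{-\ell-1}<|x-y|\le 2^{-\ell}}|u(x)-u(y)|^p\,d\nu\,d\nu$, each scale-$2^{-m}$ contribution above is $\simeq 2^{m(p+2Q-n-\alpha)}\sum_{\ell\ge m}J_\ell$, and interchanging the order of summation gives
\[
\int_B|\nabla(Su)|^p\,d\ma\lesssim\sum_\ell J_\ell\sum_{m\lesssim\ell}2^{m(p+2Q-n-\alpha)}\lesssim\sum_\ell 2^{\ell(p+2Q-n-\alpha)}J_\ell.
\]
On the other hand $[u]_{B^\theta_{p,p}(E,\nu)}^p\simeq\sum_\ell 2^{\ell(Q+\theta p)}J_\ell$, so the estimate closes exactly when $p+2Q-n-\alpha\le Q+\theta p$, i.e. when $\theta\ge 1-\tfrac{\alpha+n-Q}{p}$; this is where the assumed range of $\theta$ is used, and it is the scales nearest $E$ (large $m$) that form the binding constraint (for $p+2Q-n-\alpha\le 0$ the condition is vacuous and the bound is automatic from $\diam(E)\le 1$). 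Collecting the two bounds yields $\norm{Su}_{N^{1,p}(B,\ma)}\lesssim\norm{u}_{B^\theta_{p,p}(E,\nu)}$ for Lipschitz $u$.

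Finally I would pass from Lipschitz data to all of $B^\theta_{p,p}(E,\nu)$. Because Lipschitz functions are dense in $B^\theta_{p,p}(E,\nu)$ and $N^{1,p}(B,\ma)$ is complete, the bound just proved shows that $S$ sends Cauchy sequences to Cauchy sequences and therefore extends uniquely to a bounded linear operator on all of $B^\theta_{p,p}(E,\nu)$; the Lipschitz-to-Lipschitz statement and the identity $Su\vert_E=u$ are retained on the dense class by construction. This density route also sidesteps the one genuinely delicate point for rough data, namely verifying directly that $|\nabla(Su)|$ is a $p$-weak upper gradient of $Su$ across the $\ma$-null set $E$: the Newtonian membership of $Su$ is instead obtained for free as a limit in $N^{1,p}(B,\ma)$.
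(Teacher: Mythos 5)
Your proposal is correct and follows essentially the same route as the paper: a Whitney extension $Su=\sum_i\pip_i\,u_i$ with $\nu$-averages over boundary balls of comparable scale, the Lipschitz case treated first and then density of Lipschitz functions used to pass to all of $B^\theta_{p,p}(E,\nu)$, with the $L^p$-bound closing via $\alpha+n-Q>0$ and the gradient bound closing via $\theta\ge 1-\tfrac{\alpha+n-Q}{p}$. The only cosmetic difference is that you carry out the scale-by-scale interchange over annuli $J_\ell$ by hand, where the paper instead invokes the equivalent discrete Besov norm of \cite{GKS}; these are the same computation.
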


As one might note above, there is a lack of sharpness in the range of allowable $\theta$ in Theorem~\ref{thm:main-trace},
which then prevents us from identifying $B^\theta_{p,p}(E,\nu)$ as the trace space of $N^{1,p}(B,\mu_\alpha)$
when $\theta=1-\tfrac{\alpha+n-Q}{p}$. This hurdle is overcome if $E$ is the boundary of a uniform domain,
as shown by Mal\'y in~\cite{Maly}, see also~\cite{DFM1}. 
We do not know whether we can take $\theta=1-\tfrac{\alpha+n-Q}{p}$ in
Theorem~\ref{thm:main-trace}. 

In this note we also show that the fractals such as the
standard Sierpi\'nski carpet, the standard Sierpin\'ski gasket, and the von Koch snowflake
curve in $\R^2$
satisfy the conditions imposed on the set $E$ above, with $n=2$. For such sets, the measure $\mu_\alpha$ is 
a weighted Lebesgue measure given in~\eqref{eq:def-mu-alpha} below.  
The trace and extension theorems for the example of the von Koch snowflake curve follow from the
work of~\cite{Maly} once a family of measures $\ma$, $0\ge \alpha>-\beta_0$ for a suitable $\beta_0$ are
constructed and shown to satisfy the hypotheses of~\cite[Theorem~1.1]{Maly}. We do this construction in
Section~\ref{VonKoch}.
We show also that for the Sierpi\'nski
carpet and the Sierpi\'nski gasket, this weight
$\omega(x)=\dist(x,E)^\alpha$ is a Muckenhoupt $\mathcal{A}_q$-weight for each $q>1$ when $\alpha+2>Q$, where
$Q$ is the Hausdorff dimension of the fractal set. 
As mentioned above, the Muckenhoupt $\mathcal{A}_p$ criterion follows from~\cite[Theorem~1.1]{DILTV}, but we 
give a constructive proof for these fractals and in addition provide a proof of the co-dimensionality condition.
Observe that if a weight is an $\mathcal{A}_q$-weight, then
the associated weighted measure is doubling and supports a $q$-Poincar\'e inequality. However, not all
weights that give a doubling measure supporting a $q$-Poincar\'e inequality are $\mathcal{A}_q$-weights, see
the discussion in~\cite{HKM}.

Another interesting nonlocal space is the so-called Haj\l asz space, see for example~\cite{HM, HKSTbook}.
In~\cite[Theorem~9]{HM} it is shown that if $\Omega\subset \R^n$ satisfies an $A(c)$-condition (a porosity condition at the boundary),
then the trace of Sobolev spaces $W^{1,p}(\Omega)$
are Haj\l asz spaces of functions on $E$ when $E$ is equipped with an appropriately snow-flaked metric and 
a doubling Borel measure. 
We refer the interested reader to \cite{JW,JW2, Besov, Maz, KSW, Barton, HM} 
for more on Sobolev spaces and Besov spaces of functions
on Euclidean domains and sets, to \cite{GKS, GKZ, Maly} for more on Besov spaces of functions on
subsets of certain metric measure spaces, but this is not an exhaustive list of papers on these topics in the current literature.

\section{Background}

In this section we describe the notions used throughout this note. 
 With $0<Q<n$, let $E\subset \R^n$ 
be an Ahlfors $Q$-regular compact set
with $\diam(E)\le 1$ and let $B$ be a fixed ball in $\R^n$ such that $E\subset\tfrac12 B$.
We set $\nu=\mathcal{H}^Q\vert_E$. We also consider the measure $\ma$, obtained as a weighted measure with
respect to the Lebesgue measure on $\R^n$ as in~\cite{HKM}, namely $d\ma(x)=\omega(x)\, dm(x)$ with
$m$ the canonical Lebesuge measure on $\R^n$.

There are two basic function spaces under consideration
here: the weighted Sobolev space $W^{1,p}(B,\ma)$ and the Besov space $B^\theta_{p,p}(E,\nu)$ with $1<p<\infty$. 
Recall from~\cite{HKM} that a function $f\in L^p(B,\ma)$ is in the weighted Sobolev space $W^{1,p}(B,\ma)$
if $f$ has a weak derivative $\nabla f:B\to\R^n$ such that $|\nabla f|\in L^p(B,\ma)$. It was shown in~\cite{HKM}
that when $\ma$ is doubling and satisfies a $p$-Poincar\'e inequality near 
on $B$, then $W^{1,p}(B,\ma)$ is a Banach space. Here, $W^{1,p}(B,\ma)$ is equipped with the norm
\[
\Vert f\Vert_{N^{1,p}(B,\ma)}:=\Vert f\Vert_{L^p(B,\ma)}+\Vert |\nabla f|\Vert_{L^p(B,\ma)}.
\]
Recall that $\ma$ is doubling on $B$ if there is a constant $C\ge 1$ such that whenever $x\in B$ and $0<r\le 10$,
\[
\ma(B(x,2r))\le C\, \ma(B(x,r)),
\]
and we say that $\ma$ supports a $p$-Poincar\'e inequality on $B$ if there is a constant $C>1$ such that 
whenever $f$ is in $W^{1,p}(B,\ma)$ and
$B_0$ is a ball with $B_0\cap B\ne \emptyset$ and $\rad(B_0)\le 10$, then 
\[
\vint_{B_0}|f-f_{B_0}|\, d\ma\le C \rad(B_0)\, \left(\vint_{B_0}|\nabla f|^p\, d\ma\right)^{1/p}.
\]
Here 
\[
f_{B_0}:=\vint_{B_0}f\, d\ma=\frac{1}{\ma(B_0)}\int_{B_0} f\, d\ma.
\]
A function $u\in L^p(E,\nu)$ is in the Besov space $B^\theta_{p,p}(E,\nu)$ for a fixed $0<\theta<1$ if
\[
\Vert u\Vert_{B^\theta_{p,p}(E,\nu)}^p
:=\int_E\int_E \frac{|u(y)-u(x)|^p}{d(x,y)^{\theta p}\nu(B(x,d(x,y)))}\, d\nu(y)\, d\nu(x)
\]
is finite.

The next two notions are related to the specific examples considered in this paper.
Information about Muckenhoupt weights can be found for example in~\cite{MW1, MW2, HKM},
while information about uniform domains can be found for example in~\cite{MS, HrK, BS}; these example
references barely scratch the surface of the current literature on these topics and therefore should not be 
considered to be even an almost exhaustive list.

\begin{defn}
For $p > 1$, a weight $\omega \colon \R^n \to [0,\infty)$ is said to be a 
\emph{Muckenhoupt $\mathcal{A}_p$-weight near $B$} if 
$\omega$ is a  locally integrable function such that
\[
\sup_{B_0} \biggl( \frac{1}{m(B_0)} \int_{B_0} \omega dm \biggr) 
 \biggl(\frac{1}{m(B_0)} \int_{B_0} \omega^{-\tfrac{q}{p}} dm \biggr)^{\tfrac{p}{q}} < \infty
\]
where $\tfrac{1}{p} + \tfrac{1}{q} = 1$, and $B_0$ ranges over balls in 
$\R^n$ intersecting $B$ with $\rad(B_0)\le 10$.  In this case we write $\omega \in \mathcal{A}_p$.
\end{defn}

\begin{defn}
A domain $\Omega\subset\R^n$ is said to be a uniform domain if $\Omega\ne\R^n$ and there is a constant 
$A\ge 1$ such that for each distinct pair of points $x,y\in\Omega$ there is a curve $\gamma$ in $\Omega$
with end points $x,y$ with length $\ell(\gamma)\le A\, d(x,y)$ and 
$\min\{\ell(\gamma_{z,x}),\ell(\gamma_{z,y})\}\le A\, \dist(z,\partial\Omega)$ whenever $z$ is a point on $\gamma$.
Here $\gamma_{z,y}$ denotes each subcurve of $\gamma$ with end points $z,y$.
\end{defn}

\section{The Carpet and Gasket examples}

In this section we first consider the weighted measure 
corresponding to the weight $\omega_\alpha(x)=\dist(x,\car)^\alpha$,
with $\car$ the standard Sierpi\'nski carpet with outer perimeter $\partial [0,1]^2$.  We will show that
when $\alpha>Q-2=\tfrac{\log(8)}{\log(3)}-2$, the weighted measure is doubling. We also show that
if in addition $p>\tfrac{\alpha+2-Q}{2-Q}$, then the weight is a Muckenhoupt $\mathcal{A}_p$-weight.

For $\alpha \in \R$, we define a Borel measure $\ma$ with density $\omega_\alpha(x) = \dist(x, \car)^\alpha$ 
outside of $\car$.  That is, for Borel sets $A\subset \R^2$ we have
\begin{equation}\label{eq:def-mu-alpha}
\ma(A) = \int_A \dist(x,\car)^\alpha dx.
\end{equation}
Note that $\car$ has Lebesgue measure zero.  We now investigate for which values of $\alpha$ the measure $\ma$ is doubling.

\begin{notn}
The carpet $\car$ can be written as $[0,1]^2 \setminus \bigcup_i H_i$ where the collection $\bigcup_i H_i$ consists of 
pairwise disjoint open squares $H_i\subset[0,1]^2$.  We call the open squares $H_i$ holes.  Each hole has 
sidelength $3^{-k}$ for some $k \in \N$.  If the sidelength of a hole $H$ is $3^{-k}$, then we say $H$ 
belongs to generation $k$.  For each $k\in\N$ there are $8^{k-1}$ holes in generation $k$. 
\end{notn}

\begin{lemma}\label{hole lemma}
Let $H$ be a hole in generation $k$.  If $\alpha > -1$, then $\ma(H) = c_\alpha 3^{-k(\alpha+2)}$ where 
$c_\alpha = \frac{8}{(\alpha+1)(\alpha+2)} 2^{-(\alpha + 2)}$ .  Otherwise, $\ma(H) = \infty$.
\end{lemma}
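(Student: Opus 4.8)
The plan rests on a single geometric observation, after which the computation reduces to scaling and an elementary one-dimensional integral. The first and only delicate step is to show that for every point $x$ in a hole $H$ we have $\dist(x,\car)=\dist(x,\partial H)$. The point is that the carpet contains the walls of every hole: since the holes are pairwise disjoint \emph{open} squares and $\car=[0,1]^2\setminus\bigcup_i H_i$, any boundary point of $H$ lies in $[0,1]^2$ yet in no open hole, hence in $\car$. Consequently, for $x\in H$ any segment joining $x$ to a carpet point lying outside $\overline{H}$ must cross $\partial H\subset\car$ at a strictly nearer point, so the nearest carpet point to $x$ lies on $\partial H$, and the claimed identity follows.

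With this identity in hand, I would write $\ma(H)=\int_H \dist(x,\partial H)^\alpha\,dx$ and exploit that $H$ is a square of sidelength $3^{-k}$. Applying the affine change of variables that carries the unit square $[0,1]^2$ onto $H$ (under which distances scale by $3^{-k}$ and area by $3^{-2k}$) gives $\ma(H)=3^{-k(\alpha+2)}\,c_\alpha$, where $c_\alpha=\int_{[0,1]^2}\dist(y,\partial[0,1]^2)^\alpha\,dy$. This isolates the entire $k$-dependence in the prefactor $3^{-k(\alpha+2)}$ and reduces the lemma to the evaluation of the single constant $c_\alpha$.

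To compute $c_\alpha$, note that on the unit square $\dist(y,\partial[0,1]^2)=\min\{y_1,1-y_1,y_2,1-y_2\}$, so the two diagonals partition the square into four congruent triangles on each of which the distance equals the perpendicular distance to one edge. By this fourfold symmetry, $c_\alpha=4\int_0^{1/2}\int_t^{1-t} t^\alpha\,ds\,dt=4\int_0^{1/2} t^\alpha(1-2t)\,dt$. This integral converges exactly when $\alpha>-1$, and in that regime a direct evaluation (splitting into $\int_0^{1/2}t^\alpha\,dt$ and $\int_0^{1/2}t^{\alpha+1}\,dt$) yields $c_\alpha=\tfrac{8}{(\alpha+1)(\alpha+2)}2^{-(\alpha+2)}$; when $\alpha\le-1$ the integrand is non-integrable near $t=0$, so $c_\alpha=\infty$ and hence $\ma(H)=\infty$.

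The only step requiring genuine care is the geometric identity $\dist(x,\car)=\dist(x,\partial H)$, which hinges on the holes being open and the carpet being closed so that every hole-wall belongs to $\car$; this is what localizes the distance function to the individual hole and makes the scaling argument legitimate. Everything downstream — the change of variables and the planar integral — is routine, so I expect the writeup to be short once the boundary inclusion $\partial H\subset\car$ is recorded.
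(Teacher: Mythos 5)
Your proposal is correct and follows essentially the same route as the paper: both reduce $\ma(H)$ by the dihedral symmetry of the square to an elementary iterated integral of $t^\alpha$ over a triangle (the paper writes $\ma(H)=8\int_0^{3^{-k}/2}\int_0^x y^\alpha\,dy\,dx$, which is your computation after rescaling and swapping the order of integration), with the same convergence dichotomy at $\alpha=-1$. Your explicit justification that $\dist(x,\car)=\dist(x,\partial H)$ for $x\in H$ is a point the paper leaves implicit, and it is a welcome addition.
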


\begin{proof}
By symmetry, we have 
\[
\ma(H) = 8 \int_0^{2^{-1}3^{-k}} \int_0^x y^\alpha dy dx.  
\]
If $\alpha \leq -1$, then $\ma(H)$ is infinite.  
Otherwise, we have $\alpha + 1 > 0$ and so
\[
\ma(H) = \frac{8}{\alpha+1} \int_0^{2^{-1}3^{-k}} x^{\alpha + 1} dx 
= \frac{8\cdot\, 2^{-(\alpha+2)}}{(\alpha+1)(\alpha+2)} (3^{-k})^{\alpha + 2} = c_\alpha 3^{-k(\alpha+2)}.
\]
\end{proof}

For our analysis we use squares instead of Euclidean balls. 
For $x \in \R^2$ and $s>0$, we set 
\[
S(x, s) = \{y \in \R^2 : \norm{x - y}_\infty < \tfrac{s}{2} \},
\]
the open square in $\R^2$ centered at $x$ with sidelength $s$.  For $\tau > 0$, we set $\tau\, S(x,s) = S(x,\tau s)$
and estimate $\ma(S(x,s))$.  
To do this, we introduce families of squares that have easy to compute $\ma$ mass.  For $k \in \N$, let 
$\sk$ be the set of (open) squares of the form $S(x, 3^{-k})$ with $x$ of the form 
$((m+ \tfrac{1}{2})3^{-k}, (n+ \tfrac{1}{2})3^{-k})$ with $m, n \in \Z$.

\begin{lemma}\label{sk square lemma}
Let $S = S(x, s) \in \sk$ and $\alpha > \tfrac{\log(8)}{\log(3)} - 2$.  If 
$\overline{9S} \cap \car \neq \emptyset$, then $\ma(S) \simeq s^{\alpha + 2}$.  
Otherwise, $\ma(S) \simeq s^2 d(x, \car)^\alpha$.  In particular, if $c > 9$ is the 
smallest integer such that $\overline{cS} \cap \car \neq \emptyset$, then 
$\ma(S) \simeq c^\alpha s^{\alpha + 2}$
\end{lemma}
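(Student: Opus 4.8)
The plan is to dichotomize according to how close $S$ is to $\car$, measured by whether the dilate $\overline{9S}$ meets the carpet, and to reduce the concluding \emph{in particular} clause to the resulting ``far'' regime. Throughout write $s=3^{-k}$, and note that a square $S\in\sk$ is precisely a triadic cell of the level-$k$ subdivision of the plane aligned with the construction of $\car$; hence each such $S$ contained in $[0,1]^2$ is either a kept cell, a generation-$k$ hole, or strictly interior to a hole of some generation $m<k$, while the remaining squares lie outside $[0,1]^2$. I would first record the elementary fact that $\dist(\cdot,\car)$ is nearly constant across a single cell once the cell is a bounded number of sidelengths from $\car$: if $\overline{9S}\cap\car=\emptyset$ then the sup-norm distance satisfies $\dist_\infty(x,\car)>\tfrac92 s$, and since $\norm{y-x}_\infty<\tfrac{s}{2}$ for $y\in S$ we get $\dist_\infty(y,\car)\in(\tfrac89 D,\tfrac{10}9 D)$ with $D=\dist_\infty(x,\car)$. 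In this far regime this comparison gives at once $\ma(S)=\int_S\dist(y,\car)^\alpha\,dy\simeq D^\alpha s^2\simeq s^2\dist(x,\car)^\alpha$, with implied constants depending only on $\alpha$. The \emph{in particular} claim then follows: if $c>9$ is least with $\overline{cS}\cap\car\neq\emptyset$, then $\overline{(c-1)S}\cap\car=\emptyset$ forces $\tfrac{c-1}{2}s<\dist_\infty(x,\car)\le\tfrac{c}{2}s$, so $\dist(x,\car)\simeq c\,s$ (using $c\ge10$ and the comparability of $\norm{\cdot}_\infty$ with the Euclidean norm); substituting yields $\ma(S)\simeq s^2(cs)^\alpha=c^\alpha s^{\alpha+2}$.

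It remains to prove $\ma(S)\simeq s^{\alpha+2}$ in the near regime $\overline{9S}\cap\car\neq\emptyset$, and here I would treat the cell types separately. If $S$ is itself a generation-$k$ hole, Lemma~\ref{hole lemma} gives $\ma(S)=c_\alpha 3^{-k(\alpha+2)}\simeq s^{\alpha+2}$ directly; note $\alpha>\tfrac{\log 8}{\log 3}-2>-1$, so the lemma applies. If $S$ lies strictly inside a hole $H$ of generation $m<k$, then, since $H$ is open with $\partial H\subset\car$, the nearest carpet point to any $y\in S$ lies on $\partial H$; the near hypothesis forces $\dist(x,\partial H)\lesssim s$, while $S\subset H$ gives $\dist(x,\partial H)\ge s/2$, so $\dist(x,\partial H)\simeq s$. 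Viewing $\partial H$ at the scale $s\ll 3^{-m}$ as a single straight face (or corner) reduces $\ma(S)$ to a strip integral $\simeq s\int_0^{s}t^\alpha\,dt\simeq s^{\alpha+2}$, finite exactly because $\alpha>-1$. The case where $S$ lies outside $[0,1]^2$ within $\tfrac92 s$ of it is identical, with $\partial[0,1]^2\subset\car$ in the role of $\partial H$.

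The decisive sub-case is when $S$ is a kept level-$k$ cell. Here I would avoid any distance localization and instead use that $\car$ has Lebesgue measure zero, so $S\setminus\car$ is the disjoint union of exactly the holes contained in $S$; since $S$ is kept, every hole meeting the open cell $S$ is a descendant of $S$, and by self-similarity $S$ contains $8^{i-1}$ holes of generation $k+i$ for each $i\ge1$. Lemma~\ref{hole lemma} then gives
\[
\ma(S)=\sum_{i\ge1}8^{i-1}c_\alpha 3^{-(k+i)(\alpha+2)}=c_\alpha 3^{-k(\alpha+2)}\sum_{i\ge1}8^{i-1}3^{-i(\alpha+2)},
\]
and the geometric series sums to a finite positive constant $I_\alpha$ precisely when $8\cdot 3^{-(\alpha+2)}<1$, that is when $\alpha>\tfrac{\log 8}{\log 3}-2$. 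Thus $\ma(S)=I_\alpha\,s^{\alpha+2}\simeq s^{\alpha+2}$.

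I expect this last sub-case to be the main obstacle, as it is the only place where the hypothesis $\alpha>\tfrac{\log 8}{\log 3}-2=Q-2$ is genuinely used: the accumulation of ever-smaller descendant holes makes $\ma(S)$ infinite on a kept cell once $\alpha\le Q-2$, whereas all other sub-cases involve only one nearby carpet face and need merely $\alpha>-1$. A final bookkeeping point to verify is that the four cell types are exhaustive and consistent with the near/far split—in particular that a generation-$k$ hole and a kept cell always satisfy $\overline{9S}\cap\car\neq\emptyset$ (true since each has its boundary in $\car$), so no case is vacuous or double-counted.
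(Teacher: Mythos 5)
Your proposal is correct and follows essentially the same route as the paper: the same far/near dichotomy via $\overline{9S}\cap\car$, the same reduction of the \emph{in particular} clause to the far regime, and the same three near-regime cases (generation-$k$ hole via Lemma~\ref{hole lemma}, kept cell via the geometric series over descendant holes with convergence exactly when $\alpha>\tfrac{\log 8}{\log 3}-2$, and the residual case). The only cosmetic difference is in the residual case, where you integrate directly against the nearest face of the enclosing hole (a strip integral needing only $\alpha>-1$), while the paper instead sandwiches $\ma(S)$ between $\ma(H)$ for a translated generation-$k$ hole and the trivial bound $d(y,\car)\le 11s$; both yield $\ma(S)\simeq s^{\alpha+2}$.
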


Observe that when $\alpha> \tfrac{\log(8)}{\log(3)} - 2$, we automatically have $\alpha>-1$.

\begin{proof}
For $\alpha = 0$ the claim is clear, so we assume that $\alpha \neq 0$ for the remainder of the proof.  
Note that $s = 3^{-k}$ as $S \in \sk$.  First suppose that $\overline{9S} \cap \car \neq \emptyset$.  
We examine three cases: (i) $3^k (\overline{S}\cap \car)$ is isometric to the carpet, (ii) $S$ is a hole as above, or (iii) 
neither of these cases.  

\textbf{Case (i):}  Using Lemma \ref{hole lemma}, we compute $\ma(S)$ exactly.  For each 
$j \in \N_0$, we see that $S$ contains $8^j$ holes of generation $k+j+1$.  By assumption, 
$3^{\alpha + 2} > 8$.
As $S$ is a scaled copy of the carpet, it follows from Lemma \ref{hole lemma} that 
\begin{equation*}
\begin{split}
\ma(S) = \sum_{j=0}^{\infty} 8^j c_\alpha (3^{\alpha + 2})^{-k-j-1} &
= c_\alpha (3^{\alpha + 2})^{-k-1} \sum_{j=0}^\infty 8^j 3^{-j(\alpha+2)} \\
&= c_\alpha 3^{-k(\alpha + 2)}\frac{1}{3^{\alpha + 2} - 8} 
= \biggl(\frac{c_\alpha}{3^{\alpha+2} - 8}\biggr) s^{\alpha + 2}.
\end{split}
\end{equation*}

\textbf{Case (ii):} In this case $S$ is a hole in generation $k$, so by Lemma \ref{hole lemma} we have 
that $\ma(S) = c_\alpha s^{\alpha + 2}$.

\textbf{Case (iii):} First assume that $\alpha < 0$.  From our choice of $\sk$, in this case we must have 
that $S \cap \car = \emptyset$. It is clear that if $H$ is a hole of generation $k$ and $\iota \colon S \to H$ 
is an isometry given by translation, then for all $y \in S$ we have $d(y, \car) \geq d(\iota(y), \car)$.  As 
$\alpha < 0$, it follows that $d(y, \car)^\alpha \leq d(\iota(y), \car)^\alpha$ and so 
$\ma(S) \leq \ma(H) \simeq s^{\alpha + 2}$.  On the other hand, 
$\overline{9S} \cap \car \neq \emptyset$, so $d(y, \car) \leq 11 \cdot 3^{-k} = 11s$ for 
all $y \in S$.  Hence, as $\alpha < 0$ we have $d(y,\car)^\alpha \geq (11s)^\alpha$.  It follows that
\[
\ma(S) \geq \int_S (11s)^\alpha = 11^\alpha s^{\alpha + 2}.
\]
If $\alpha > 0$ instead, then the lower and upper bounds above are reversed but the conclusion is the same.  

Now suppose that $\overline{9S} \cap \car = \emptyset$.  It follows that $d(x, \car) \geq 3s$.  Then, if $y \in S$, we have 
\[
\tfrac{1}{3} d(x, \car) \leq d(x, \car) - s \leq d(y, \car) \leq d(x, \car) + s \leq 2 d(x, \car).
\]
The result follows immediately.

For the last part of the lemma,  if $c > 9$ is the smallest integer such that $\overline{cS} \cap \car \neq \emptyset$, then $d(x, \car) \simeq cs$ and so $\ma(S) \simeq c^\alpha s^{\alpha + 2}$.
\end{proof}

We now use Lemma \ref{sk square lemma} to prove the same result for general squares.

\begin{lemma}\label{gen square lemma}
Let $S = S(x, s)$ with $s \le 9$.  Let $\alpha > \tfrac{\log(8)}{\log(3)} - 2$.  If $\overline{9S} \cap \car \neq \emptyset$, then $\ma(S) \simeq s^{\alpha + 2}$.  Otherwise, $\ma(S) \simeq s^2 d(x, \car)^\alpha$.  In particular, if $c > 9$ is the smallest integer such that $\overline{cS} \cap \car \neq \emptyset$, then $\ma(S) \simeq c^\alpha s^{\alpha + 2}$
\end{lemma}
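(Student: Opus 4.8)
The plan is to bootstrap from Lemma~\ref{sk square lemma}, which already establishes the comparison $\ma(S)\simeq s^{\alpha+2}$ (respectively $\ma(S)\simeq s^2 d(x,\car)^\alpha$) for the \emph{special} dyadic-type squares $S\in\sk$. The strategy is to sandwich an arbitrary square $S=S(x,s)$ between two special squares of comparable size and invoke monotonicity of $\ma$ together with the doubling-type estimates already available. First I would choose $k\in\N$ so that $3^{-k}\le s<3^{-(k+1)}$, so that $s\simeq 3^{-k}$ with absolute comparison constants. This fixes the generation whose grid $\sk$ we will use for the comparison.

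Next I would produce a containing special square and a contained special square. Given $x$, let $S'=S(x',3^{-k})\in\sk$ be a square from the grid whose center $x'$ is within $O(3^{-k})$ of $x$ (the nearest grid center works). Since the grid $\sk$ tiles $\R^2$ by squares of sidelength $3^{-k}$, the arbitrary square $S=S(x,s)$ is covered by a bounded number (at most a fixed constant, say $N_0$, independent of $k$ and $x$) of adjacent squares from $\sk$, and conversely $S$ contains at least one full grid square provided $s$ is a sufficiently large multiple of $3^{-k}$; if not, one simply notes $S$ is contained in a bounded union of grid squares and is comparable in $\ma$-measure to any one of them by the last sentence of Lemma~\ref{sk square lemma}. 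The key mechanism is that all the grid squares meeting $S$ have the \emph{same} value of the integer $c$ up to a bounded additive error, because their centers are mutually within $O(3^{-k})=O(s)$, hence their distances to $\car$ are comparable and the ``smallest $c$ with $\overline{cS}\cap\car\neq\emptyset$'' changes only by a bounded factor. This is what lets the per-square estimate $\ma\simeq c^\alpha s^{\alpha+2}$ transfer from the grid squares to $S$ with uniform constants.

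Concretely, I would argue the dichotomy directly. If $\overline{9S}\cap\car\neq\emptyset$, then the nearby grid squares also satisfy an analogous condition $\overline{c_0 S'}\cap\car\neq\emptyset$ for some fixed $c_0$ (absorbing the factor-of-$3$ loss in passing from $S$ to $S'$ and back), so Lemma~\ref{sk square lemma} gives $\ma(S')\simeq (3^{-k})^{\alpha+2}\simeq s^{\alpha+2}$, and by the sandwiching $\ma(S)\simeq\ma(S')\simeq s^{\alpha+2}$. If instead $\overline{9S}\cap\car=\emptyset$, then $d(x,\car)\ge 3s$ and for every $y\in S$ we have $d(y,\car)\simeq d(x,\car)$ (exactly as in the final paragraph of the proof of Lemma~\ref{sk square lemma}, since $|d(y,\car)-d(x,\car)|\le s\le\tfrac13 d(x,\car)$), whence $\ma(S)=\int_S d(y,\car)^\alpha\,dm\simeq s^2 d(x,\car)^\alpha$ by integrating the two-sided comparison over the area-$s^2$ square. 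The final ``in particular'' clause then follows as before: when $c>9$ is minimal with $\overline{cS}\cap\car\neq\emptyset$ one has $d(x,\car)\simeq cs$, so $s^2 d(x,\car)^\alpha\simeq c^\alpha s^{\alpha+2}$.

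The main obstacle I anticipate is bookkeeping the comparison constants uniformly in the regime where $s$ is not itself a power of $3$ and where $S$ straddles grid squares with differing local geometry relative to $\car$; one must check that the integer $c$ associated to $S$ and to the auxiliary grid squares differ only multiplicatively by a fixed factor, so that $c^\alpha$ contributes a bounded constant (for both signs of $\alpha$). Because $\alpha$ can be negative, care is needed that the monotonicity inequalities point the right way and that no comparison constant degenerates as $k\to\infty$; the fact that $s\simeq 3^{-k}$ with constants in $[1,3)$ and that only $O(1)$ grid squares are ever involved is precisely what keeps everything uniform. Apart from this, the argument is a routine reduction, since all the genuinely measure-theoretic content — the geometric-series summation over holes and the distance comparisons — is already contained in Lemmas~\ref{hole lemma} and~\ref{sk square lemma}.
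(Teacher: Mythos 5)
Your proposal follows essentially the same route as the paper: pick the largest grid scale $3^{-k}$ below $s$, cover $S$ by boundedly many squares of $\sk$ for the upper bound, observe that each such square is uniformly close to $\car$ relative to its sidelength so Lemma~\ref{sk square lemma} applies with uniform constants, and treat the case $\overline{9S}\cap\car=\emptyset$ by the same pointwise comparison $d(y,\car)\simeq d(x,\car)$. Two small points to tighten: your bracketing ``$3^{-k}\le s<3^{-(k+1)}$'' is written backwards, and your fallback for the lower bound (that $S$ is ``contained in a bounded union of grid squares and is comparable in $\ma$-measure to any one of them'') only yields $\ma(S)\lesssim s^{\alpha+2}$ --- the correct fix, as in the paper, is to exhibit a grid square of generation $k+2$ contained in $S$ and apply Lemma~\ref{sk square lemma} to it to get $\ma(S)\gtrsim s^{\alpha+2}$.
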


\begin{proof}
The proof that if $\overline{9S} \cap \car = \emptyset$, then $\ma(S) \simeq s^2 d(x, \car)^\alpha$ is the same as in Lemma \ref{sk square lemma}.  For the first part of the statement of the lemma, suppose that $\overline{9S} \cap \car \neq \emptyset$.  Let $k \in \N$ be the smallest integer with $3^{-k} < s$.  As $s\le 9$ we have $\tfrac{s}{3^{-k}}\le 3$.  
It follows that there is a subset $\{S_i\}_{i \in I} \subseteq S_k$ with $S \subseteq \cup_{i \in I} S_i$ and 
$|I| \le 25$.  Write $S_i = S(x_i, s_k)$ with $s_k = 3^{-k}$.  For each $S_i$ we have 
\[
d(x_i, \car) \leq s_k + s + d(x, \car)
\]
and, as $s_k \simeq s$ and $d(x,\car) \simeq s$, there is a constant $a > 0$ independent of $i$ and $S$ 
such that $\overline{aS_i} \cap \car \neq \emptyset$ for each $i \in I$.  Hence, we may apply 
Lemma~\ref{sk square lemma} to each $S_i$ (with different squares $S_i$ potentially falling into 
different cases of Lemma~\ref{sk square lemma}) and conclude that $\ma(S_i) \simeq s^{\alpha + 2}$.  
Hence, as $|I| \le 25$ we have $\ma(S) \lesssim s^{\alpha + 2}$.  

For the lower bound, we again choose the smallest integer $k$ with $3^{-k} < s$ and then note that 
there is a square $S' \in \mathcal{S}^{k+2}$ with $S' \subseteq S$.  An argument similar to the above 
shows us that $\ma(S') \simeq s^{\alpha + 2}$, and so $\ma(S) \gtrsim s^{\alpha + 2}$.  
\end{proof}

We now show that $\ma$ is doubling for small squares.

\begin{lemma}[Doubling]
Let $S = S(x, s)$ be a square such that $s \le 9$.  Then, $\ma(3S) \lesssim \ma(S)$.  
\end{lemma}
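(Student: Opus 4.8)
The plan is to reduce the doubling inequality $\ma(3S)\lesssim\ma(S)$ to the mass estimates already established in Lemma~\ref{gen square lemma}, by splitting into the two geometric regimes governed by whether $S$ sits near the carpet or far from it. The key observation is that Lemma~\ref{gen square lemma} gives a clean two-sided comparison of $\ma(S(x,s))$ in terms of $s$, $\dist(x,\car)$, and the dilation factor needed to reach $\car$; since dilating $S$ to $3S$ changes $s$ by a bounded factor and moves the center not at all, these comparison quantities can only change by bounded multiplicative constants. So the whole argument is a matter of tracking how the controlling parameters transform under $S\mapsto 3S$.

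First I would dispose of the case where $\overline{9S}\cap\car\neq\emptyset$. In this regime Lemma~\ref{gen square lemma} gives $\ma(S)\simeq s^{\alpha+2}$. For $3S=S(x,3s)$ I need $3s\le 9$, i.e. $s\le 3$; when $s\le 3$ the dilate still has sidelength at most $9$ so Lemma~\ref{gen square lemma} applies to $3S$ as well. Since $\overline{9S}\subseteq\overline{9\cdot 3S}=\overline{27S}$, if $\overline{9(3S)}\cap\car\neq\emptyset$ then $\ma(3S)\simeq (3s)^{\alpha+2}\simeq s^{\alpha+2}\simeq\ma(S)$, and the constant is $3^{\alpha+2}$ absorbed into $\simeq$. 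The subcase $s\le 9$ but $3s>9$ (that is, $3<s\le 9$) must be handled separately, presumably by covering $3S$ by boundedly many squares of sidelength $\le 9$ and summing, exactly as in the upper-bound argument inside Lemma~\ref{gen square lemma}; this is a routine finite-overlap covering step.

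The second case is $\overline{9S}\cap\car=\emptyset$, where Lemma~\ref{gen square lemma} gives $\ma(S)\simeq s^2\,\dist(x,\car)^\alpha$. Here there is a genuine dichotomy for $3S$: either $\overline{9(3S)}$ still misses $\car$, in which case $\ma(3S)\simeq (3s)^2\dist(x,\car)^\alpha\simeq s^2\dist(x,\car)^\alpha\simeq\ma(S)$ since the center and hence $\dist(x,\car)$ are unchanged; or $\overline{27S}$ now meets $\car$ while $\overline{9S}$ did not, which forces $\dist(x,\car)\simeq s$ (it lies between roughly $3s$ and $27s$). In that borderline situation $\ma(S)\simeq s^2\dist(x,\car)^\alpha\simeq s^{\alpha+2}$ and $\ma(3S)\simeq (3s)^{\alpha+2}\simeq s^{\alpha+2}$, so the two masses are again comparable. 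I would phrase this uniformly using the last clause of Lemma~\ref{gen square lemma}: if $c$ is the smallest integer with $\overline{cS}\cap\car\neq\emptyset$, then $\ma(S)\simeq c^\alpha s^{\alpha+2}$, and the corresponding integer $c'$ for $3S$ satisfies $c'\simeq c/3$, whence $\ma(3S)\simeq (c')^\alpha (3s)^{\alpha+2}\simeq c^\alpha s^{\alpha+2}\simeq\ma(S)$, the implied constants depending only on $\alpha$.

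The main obstacle, such as it is, lies in making the ``$c'\simeq c/3$'' bookkeeping precise and in not double-counting the constants hidden in $\simeq$ when $\alpha<0$ (so that $c^\alpha$ is decreasing in $c$) versus $\alpha>0$; one must check that the ratio $\ma(3S)/\ma(S)$ stays bounded uniformly over all $s\le 9$ and all positions of $x$, including the transitional squares straddling the two cases. The cleanest route is to observe that for any $S(x,s)$ with $s\le 9$ the quantity $\max\{s,\dist(x,\car)\}$ controls everything: indeed Lemma~\ref{gen square lemma} can be summarized as $\ma(S(x,s))\simeq s^2\max\{s,\dist(x,\car)\}^\alpha$ uniformly, and since $\max\{3s,\dist(x,\car)\}\le 3\max\{s,\dist(x,\car)\}$ the doubling bound follows at once with constant depending only on $\alpha$. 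I expect the final write-up to be short, with the only real care needed being the restriction $s\le 9$ versus $3s\le 9$ and the finite-overlap patch for $3<s\le 9$.
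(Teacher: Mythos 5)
Your proposal is correct and follows essentially the same route as the paper: both arguments reduce the doubling bound to the two regimes of Lemma~\ref{gen square lemma} via a case split on whether a fixed dilate of $S$ meets $\car$ (the paper splits on $\overline{27S}=\overline{9(3S)}$, you split first on $\overline{9S}$, which is the same analysis reorganized), and your closing reformulation $\ma(S(x,s))\simeq s^2\max\{s,\dist(x,\car)\}^\alpha$ is just a compact restatement of that lemma. Your extra attention to the range issue (applying Lemma~\ref{gen square lemma} to $3S$ strictly requires $3s\le 9$, so the band $3<s\le 9$ needs the finite-covering patch) is a point the paper silently glosses over, and is handled correctly by your proposed covering step.
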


\begin{proof}
First suppose that $\overline{27S} \cap \car = \emptyset$.  Then, by Lemma \ref{gen square lemma} 
we know that $\ma(3S) \simeq (3s)^2 d(x, \car)^\alpha$ and $\ma(S) \simeq s^2 d(x, \car)^\alpha$.  

Now, suppose that $\overline{27S} \cap \car \neq \emptyset$.  Then, by Lemma \ref{gen square lemma}, 
we know that $\ma(3S) \simeq (3s)^{\alpha + 2}$.  We also see from Lemma \ref{gen square lemma} 
that $\ma(S) \simeq s^{\alpha + 2}$ (either $\overline{3S} \cap \car \neq \emptyset$, or we have $c \leq 27$ 
in the last part of the statement of Lemma \ref{gen square lemma}).  
\end{proof}

Note that $q = \frac{p}{p-1}$.  We investigate conditions that guarantee that the weight $\wa$ given by
$\wa(x) = \dist(x, \car)^\alpha$ is in the Muckenhoupt class $\mathcal{A}_p$, see also~\cite[Theorem~1.1]{DILTV}.   
It is clear that in 
the definition of $\mathcal{A}_p$ we may replace the use of balls $B$ with that of squares $S$.

\begin{lemma}[$\mathcal{A}_p$ weights]\label{lem-ap-weights}
The function $\wa(x) = \dist(x, \car)^\alpha$ is a Muckenhoupt $\mathcal{A}_p$-weight near $B$ when 
$\tfrac{\log(8)}{\log(3)} - 2 < \alpha <  (p-1)(2 - \tfrac{\log(8)}{\log(3)})$.
\end{lemma}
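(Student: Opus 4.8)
The plan is to verify the Muckenhoupt $\mathcal{A}_p$ condition directly using the mass estimates from Lemma~\ref{gen square lemma}, exploiting the fact (already noted in the excerpt) that balls may be replaced by squares $S=S(x,s)$. For a square $S$, I need to estimate the product
\[
P(S):=\left(\frac{1}{m(S)}\int_S \wa\,dm\right)\left(\frac{1}{m(S)}\int_S \wa^{-q/p}\,dm\right)^{p/q},
\]
where $m(S)=s^2$ and $\int_S\wa\,dm=\ma(S)$. The key observation is that $\wa^{-q/p}(x)=\dist(x,\car)^{-\alpha q/p}=\dist(x,\car)^{\alpha'}$ is itself a weight of the same form with exponent $\alpha'=-\alpha q/p$. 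So $\int_S\wa^{-q/p}\,dm=\mu_{\alpha'}(S)$, and I can apply Lemma~\ref{gen square lemma} to \emph{both} factors simultaneously, provided both exponents lie in the admissible range $\alpha,\alpha'>\tfrac{\log 8}{\log 3}-2$.

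First I would reduce to small squares: since the weight is comparable to a positive constant away from $\car$ and $B$ is bounded, the supremum over all squares meeting $B$ with $\rad\le 10$ is controlled by the supremum over squares with $s\le 9$, so Lemma~\ref{gen square lemma} applies. Then I would split into the two geometric regimes of that lemma. In the regime $\overline{9S}\cap\car\ne\emptyset$, Lemma~\ref{gen square lemma} gives $\ma(S)\simeq s^{\alpha+2}$ and likewise $\mu_{\alpha'}(S)\simeq s^{\alpha'+2}$, so
\[
P(S)\simeq \frac{s^{\alpha+2}}{s^2}\cdot\left(\frac{s^{\alpha'+2}}{s^2}\right)^{p/q}=s^{\alpha}\,s^{\alpha'\,p/q}=s^{\alpha-\alpha q/p\cdot p/q}=s^{\alpha-\alpha}=s^0,
\]
a bounded constant. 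The cancellation $\alpha+\alpha'\cdot\tfrac{p}{q}=\alpha-\alpha=0$ is exactly the defining relation $q/p$ working in our favor, and it is what forces scale-invariance. In the regime $\overline{9S}\cap\car=\emptyset$ (so $S$ is far from the carpet), Lemma~\ref{gen square lemma} gives $\ma(S)\simeq s^2\,d(x,\car)^\alpha$ and $\mu_{\alpha'}(S)\simeq s^2\,d(x,\car)^{\alpha'}$; here the $s^2$ factors cancel against $m(S)=s^2$ and the $d(x,\car)$ powers combine as $d(x,\car)^{\alpha+\alpha'p/q}=d(x,\car)^0=1$, again giving $P(S)\simeq 1$.

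The main obstacle is pinning down the admissible range of $\alpha$. The estimate requires both $\alpha$ and $\alpha'=-\alpha q/p$ to satisfy the hypothesis of Lemma~\ref{gen square lemma}, i.e.\ $\alpha>\tfrac{\log 8}{\log 3}-2$ \emph{and} $-\alpha q/p>\tfrac{\log 8}{\log 3}-2$. Writing $Q=\tfrac{\log 8}{\log 3}$ and using $q/p=1/(p-1)$, the second inequality becomes $\alpha/(p-1)<2-Q$, that is $\alpha<(p-1)(2-Q)$, which together with the first yields precisely the stated range $Q-2<\alpha<(p-1)(2-Q)$. I would also need to check the boundary intermediate case where one of $S$, its dilates straddles the two regimes, but since Lemma~\ref{gen square lemma} is stated uniformly with comparison constants independent of $S$, and the two regimes yield the same bound $P(S)\lesssim 1$, taking the supremum over all such $S$ gives a finite bound and hence $\wa\in\mathcal{A}_p$.
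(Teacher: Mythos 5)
Your proposal is correct and follows essentially the same route as the paper: apply Lemma~\ref{gen square lemma} to both $\wa$ and $\wa^{-q/p}=\dist(\cdot,\car)^{-\alpha/(p-1)}$ in the two regimes $\overline{9S}\cap\car\neq\emptyset$ and $\overline{9S}\cap\car=\emptyset$, with the stated range of $\alpha$ arising exactly from requiring both exponents $\alpha$ and $-\alpha/(p-1)$ to exceed $\tfrac{\log 8}{\log 3}-2$. The cancellation of exponents you highlight is precisely the computation in the paper's proof.
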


\begin{proof}
Let $\tfrac{\log(8)}{\log(3)} - 2 < \alpha <  (p-1)(2 - \tfrac{\log(8)}{\log(3)})$.  Let $S = S(x,s)$ be a square with 
$s < 9$.  First, assume that $\overline{9S} \cap \car \neq \emptyset$.  Then, by Lemma \ref{gen square lemma}, 
\[
\frac{1}{m(B)} \int_B \wa dm \lesssim \frac{1}{s^2} s^{\alpha + 2} = s^\alpha.
\]
We see $\tfrac{q}{p} =\tfrac{1}{p-1}$.  As $-\tfrac{\alpha}{p-1} > \tfrac{\log(8)}{\log(3)} - 2$, by Lemma \ref{gen square lemma} we have
\begin{equation*}
\begin{split}
\biggl(\frac{1}{m(B)} \int_B \wa^{-\tfrac{q}{p}} dm \biggr)^{\tfrac{p}{q}} &= \biggl(\frac{1}{m(B)} \int_B \dist(x,\car)^{-\tfrac{\alpha}{p-1}} dx \biggr)^{p-1} \\
&\lesssim \biggl(\frac{1}{s^2} s^{-\tfrac{\alpha}{p-1} + 2}\biggr)^{p-1} = s^{-\alpha}.
\end{split}
\end{equation*}
It follows that the $\mathcal{A}_p$ bound holds for these squares for $\alpha$ in the above range.

If, instead, we have $\overline{9S} \cap \car = \emptyset$, then with $\alpha$ in the same range we have 
\[
\frac{1}{m(B)} \int_B \wa dm \lesssim \frac{1}{s^2} s^{2}\dist(x,\car)^\alpha =\dist(x,\car)^\alpha
\]
and
\begin{equation*}
\begin{split}
\biggl(\frac{1}{m(B)} \int_B \wa^{-\tfrac{q}{p}} dm \biggr)^{\tfrac{p}{q}} &= \biggl(\frac{1}{m(B)} \int_B \dist(x,\car)^{-\tfrac{\alpha}{p-1}} dx \biggr)^{p-1} \\
&\lesssim \biggl(\frac{1}{s^2} s^{2} \dist(x, \car)^{-\tfrac{\alpha}{p-1}}\biggr)^{p-1} = \dist(x,\car)^{-\alpha}.
\end{split}
\end{equation*}
It again follows that for $\tfrac{\log(8)}{\log(3)} - 2 < \alpha <  (p-1)(2 - \tfrac{\log(8)}{\log(3)})$ the $\mathcal{A}_p$ bound holds.
\end{proof}

Now, let $\gas$ denote the Sierpi\'nski gasket for which the points $(0,0), (1,0)$, and $(\tfrac{1}{2}, \tfrac{\sqrt{3}}{2})$ are the vertices of its boundary triangle.  Consider the measures
\[
\magas(A) = \int_A \dist(x,\gas)^\alpha dx.
\]
As for the carpet, for the correct values of $\alpha$ the measures $\magas$ are doubling and the functions $\dist(x, \gas)^\alpha$ are $\mathcal{A}_p$ weights.  The argument for this is similar, and we summarize the differences below.

First, squares are no longer the natural objects for integration.  Instead, it is easier to work with equilateral triangles.  The grids $\sk$ are replaced by grids of equilateral triangles with side lengths $2^{-k}$.  If $T = T(x, s)$ is such a triangle (centered at $x$ with side length $s$), then for $\alpha > \tfrac{\log(3)}{\log(2)} - 2$ we can estimate $\magas(T)$ as in Lemma \ref{sk square lemma}.  For grid triangles far from the gasket relative to their side lengths, the estimate $\magas(T) \simeq s^2 \dist(x, \gas)^\alpha$ still holds.  For grid triangles near the gasket relative to their side length but which are neither holes nor scaled versions of the gasket, the estimate $\magas(T) \simeq s^{2+\alpha}$ still holds by comparison with $\magas(H)$ for hole-triangles $H$.  For a single hole triangle $T$ with side length $s = 2^{-k}$, we have
\[
\magas(T) = 6 \int_0^{s/2} \int_0^{x / \sqrt{3}} y^\alpha dy dx \simeq s^{\alpha + 2}.
\]
For triangles $T = T(x, s)$ where $T \cap \gas$ is a scaled copy of the gasket, we see that if $s = 2^{-k}$ then for each $j \in \N_0$, the triangle $T$ contains $3^j$ hole triangles of side length $2^{-k-j-1}$.  Hence, in this case
\[
\magas(T) \simeq \sum_{j=0}^\infty 3^j 2^{(-k-j-1)(\alpha + 2)} \simeq  s^{\alpha + 2}
\]
where the series is finite as $2^{\alpha+2} > 3$.  

As in Lemma \ref{gen square lemma}, one again estimates the $\magas$ measure of arbitrary triangles from the grid triangles.  Once this is done, the doubling property and $\mathcal{A}_p$ weight condition are as before.  For the function $\dist(x, \gas)^\alpha$ to be an $\mathcal{A}_p$ weight, we require $\tfrac{\log(3)}{\log(2)} - 2 < \alpha <  (p-1)(2 - \tfrac{\log(3)}{\log(2)})$.

\section{The von Koch snowflake curve}\label{VonKoch}

In this section we consider the von Koch snowflake curve $K$ 
(with an equilateral triangle with side-lengths $1$ as a ``zeroth" iteration)
that is the boundary of the von Koch snowflake domain. We will show that $\Omega$ will satisfy the hypotheses
of our paper with $K$ playing the role of $E$ and the ball $B$ replaced by $\Omega$. However, in this case
we obtain a sharper result by combining the results of~\cite{BS} with~\cite[Theorem~1.1]{Maly} to obtain  the full range
$0<\theta\le 1-\tfrac{\alpha+n-Q}{p}$ in Theorem~\ref{thm:main-trace}.

From~\cite[Theorem~1]{Ahl}  we know that the snowflake curve is a quasicircle. Therefore, 
from~\cite[Theorem~2.15]{MS} or~\cite[Theorem~1.2]{Hr} it follows that the von Koch snowflake domain is a
uniform domain; this sets us up to use the results of~\cite{BS}.

\begin{defn}[Definition 2.6 in \cite{BS}]
Let $\Omega\subset\R^n$ be a domain and 
$\beta > 0$.  We say that $\Omega$ satisfies a local $\beta$-shell condition if there is a constant 
$C > 0$ such that for all $x \in \overline{\Omega}$ and $0 < \rho \leq r \leq \diam(\Omega)$ we have
\begin{equation}\label{beta shell cond}
m(\{y \in B(x,r) \cap \Omega : \delta_\Omega(y) \leq \rho\}) \leq C \biggl( \frac{\rho}{r}\biggr)^\beta m(B(x,r) \cap \Omega)
\end{equation}
where $\delta_\Omega(y) = \dist(y, \partial\Omega)$. 
Recall here that $m$ is the $n$-dimensional Lebesgue measure on $\R^n$.
\end{defn}

\begin{defn}
We say that $\Omega$ satisfies a strong local $\beta$-shell condition if it satisfies a local $\beta$-shell condition
and in addition,
\begin{equation}\label{strong beta shell cond}
m(\{y \in B(x,r) \cap \Omega : \delta_\Omega(y) \leq \rho\}) \simeq \biggl( \frac{\rho}{r}\biggr)^\beta m(B(x,r) \cap \Omega)
\end{equation}
whenever $x\in \partial\Omega$ and $0 < \rho \leq r \leq \diam(\Omega)$.
\end{defn}

By \cite[Lemma 2.7]{BS}, if $\Omega\subset\R^n$ is a bounded domain
that satisfies the above local $\beta$-shell condition for 
some $\beta>0$, then for all 
$\alpha > -\beta$ the measure $d\ma(y)=\delta_\Omega(y)^\alpha dm(y)$ is doubling on $\Omega$.  Combining this 
with~\cite[Theorem~4.4]{BS} and noting that the Newton-Sobolev space discussed there is the same as the
standard Sobolev space $W^{1,p}(\R^2)$ in our setting (see for example~\cite{HKSTbook}) tells us that
when $\Omega$ is the von Koch snowflake domain, the metric measure space
$(\Omega, d, \ma)$ is doubling and supports a $1$-Poincar\'e inequality. Here $d$ is the Euclidean metric. Moreover, 
with $\nu=\mathcal{H}^Q\vert_K$ where $Q$ is the Hausdorff dimension of $K$, we would also
have that $\nu(B(x,r))\simeq r^Q$ whenever $x\in K$ and $0<r\le 10$. 

\begin{lemma}\label{lem:vonKoch-codim}
Suppose that $\Omega\subset\R^n$ a bounded domain that 
satisfies a strong local $\beta$-shell condition for some $\beta>0$ and that
$K=\partial\Omega$ is Ahlfors $Q$-regular for some $n>Q>0$. For $-\beta<\alpha\le 0$ we set
$\ma$ to be the measure on $\Omega$ given by $d\ma(y)=\delta_\Omega(y)^\alpha\, dm(y)$. 
Moreover, assume that for each $x\in K$ and $0<r\le 10$ we have $m(B(x,r)\cap\Omega)\simeq r^n$.
Then  whenever $0<r\le 10$ and $x\in K$, we have 
\[
\ma(B(x,r)\cap\Omega)\simeq r^{n+\alpha-Q}\, \mathcal{H}^Q(B(x,r)\cap K).
\]
\end{lemma}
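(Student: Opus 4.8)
The plan is to reduce the claimed two-sided estimate to the single assertion $\ma(B(x,r)\cap\Omega)\simeq r^{n+\alpha}$. Since $K$ is Ahlfors $Q$-regular we have $\mathcal{H}^Q(B(x,r)\cap K)\simeq r^Q$ for $x\in K$ and $0<r\le 10$, so the right-hand side $r^{n+\alpha-Q}\,\mathcal{H}^Q(B(x,r)\cap K)$ is itself comparable to $r^{n+\alpha}$; matching this with $\ma(B(x,r)\cap\Omega)\simeq r^{n+\alpha}$ then yields the lemma. The case $\alpha=0$ is immediate, since $\mu_0=m$ and $m(B(x,r)\cap\Omega)\simeq r^n$ by hypothesis, so I would focus on $-\beta<\alpha<0$.

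The main tool is a dyadic decomposition of $B(x,r)\cap\Omega$ according to the size of $\delta_\Omega$ together with the strong local $\beta$-shell condition. Since $x\in K=\partial\Omega$, every $y\in B(x,r)\cap\Omega$ satisfies $0<\delta_\Omega(y)\le|y-x|<r$, so with $g(\rho):=m(\{y\in B(x,r)\cap\Omega:\delta_\Omega(y)\le\rho\})$ the strong shell condition~\eqref{strong beta shell cond} and the assumption $m(B(x,r)\cap\Omega)\simeq r^n$ give $g(\rho)\simeq(\rho/r)^\beta r^n=\rho^\beta r^{n-\beta}$ for all $0<\rho\le r$. Writing $A_j=\{y\in B(x,r)\cap\Omega:2^{-j-1}r<\delta_\Omega(y)\le 2^{-j}r\}$, these sets partition $B(x,r)\cap\Omega$ up to the $m$-null set $K$, and on $A_j$ one has $\delta_\Omega(y)^\alpha\simeq(2^{-j}r)^\alpha$. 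For the upper bound I would estimate $m(A_j)\le g(2^{-j}r)\lesssim 2^{-j\beta}r^n$ and sum:
\[
\ma(B(x,r)\cap\Omega)=\sum_{j\ge 0}\int_{A_j}\delta_\Omega^\alpha\,dm\lesssim\sum_{j\ge 0}(2^{-j}r)^\alpha 2^{-j\beta}r^n=r^{n+\alpha}\sum_{j\ge 0}2^{-j(\alpha+\beta)},
\]
and the geometric series converges precisely because $\alpha+\beta>0$, which is the one place the hypothesis $\alpha>-\beta$ is used. This gives $\ma(B(x,r)\cap\Omega)\lesssim r^{n+\alpha}$.

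The lower bound is the delicate step, and I expect it to be the main obstacle: the measure of a single annulus, $m(A_j)=g(2^{-j}r)-g(2^{-j-1}r)$, is a difference of two quantities each comparable to $2^{-j\beta}r^n$, so the multiplicative constants hidden in the shell condition could in principle swallow the lower bound. To circumvent this I would group a fixed number $N$ of consecutive dyadic scales. Writing $c_1\rho^\beta r^{n-\beta}\le g(\rho)\le c_2\rho^\beta r^{n-\beta}$ for $0<\rho\le r$, I choose $N$ large enough that $c_2 2^{-N\beta}<c_1/2$, so that the thick annulus $\widetilde A=\{y\in B(x,r)\cap\Omega:2^{-N}r<\delta_\Omega(y)\le r\}$ satisfies
\[
m(\widetilde A)=g(r)-g(2^{-N}r)\ge\bigl(c_1-c_2 2^{-N\beta}\bigr)r^n\ge\tfrac{c_1}{2}\,r^n.
\]
Since $\delta_\Omega^\alpha\ge r^\alpha$ on $\widetilde A$ (as $\alpha<0$ and $\delta_\Omega\le r$ there), this yields $\ma(B(x,r)\cap\Omega)\ge\int_{\widetilde A}\delta_\Omega^\alpha\,dm\ge r^\alpha\cdot\tfrac{c_1}{2}r^n\gtrsim r^{n+\alpha}$. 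Combining the two bounds gives $\ma(B(x,r)\cap\Omega)\simeq r^{n+\alpha}$, and together with the Ahlfors regularity of $K$ this establishes the lemma.
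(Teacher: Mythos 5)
Your proof is correct and follows essentially the same route as the paper's: the paper computes $\ma(B(x,r)\cap\Omega)$ via the Cavalieri principle, splitting the integral at $t=r^{-|\alpha|}$ and applying the shell condition to the tail, which is just the continuous version of your dyadic decomposition in $\delta_\Omega$, with convergence in both cases coming from $\alpha+\beta>0$. One small remark: your lower bound is more elaborate than necessary, since $\delta_\Omega(y)\le|y-x|<r$ for every $y\in B(x,r)\cap\Omega$ and $\alpha<0$ already give $\delta_\Omega^\alpha\ge r^\alpha$ on the whole ball, so $\ma(B(x,r)\cap\Omega)\ge r^\alpha\, m(B(x,r)\cap\Omega)\gtrsim r^{n+\alpha}$ directly, without the thick-annulus argument.
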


\begin{proof}
Fix $x\in K$ and $0<r\le 10$, and without loss of generality assume that $\alpha<0$. Then by the Cavalieri principle,
\begin{align*}
\ma(B(x,r)\cap\Omega)&=\int_{B(x,r)\cap\Omega}\frac{1}{\delta_\Omega(y)^{|\alpha|}}\, dm(y)\\
  &=\int_0^\infty m(\{y\in B(x,r)\cap\Omega\, :\, \delta_\Omega(y)^{-|\alpha|}>t\})\, dt\\
  &\simeq \int_0^M Cr^n\, dt+\int_M^\infty m(\{y\in B(x,r)\cap\Omega\, :\, \delta_\Omega(y)<t^{-1/|\alpha|}\})\, dt
\end{align*}
where $M=r^{-|\alpha|}>0$. 
Note that as $\beta<\alpha<0$, we have  $\beta/|\alpha|>1$. 
Using $M=r^{-|\alpha|}$ and the local shell property, we obtain
\begin{align*}
\ma(B(x,r)\cap\Omega)&\simeq r^{n-|\alpha|}+\int_M^\infty \left(\frac{1}{t^{1/|\alpha|}r}\right)^\beta r^n\, dt\\
 &\simeq r^{n+\alpha}+r^{n-\beta}M^{1-\beta/|\alpha|}
 \simeq r^{n+\alpha}.
\end{align*}
Since $\mathcal{H}^Q(B(x,r)\cap K)\simeq r^Q$, the conclusion follows.
\end{proof}

Hence if the von Koch snowflake domain satisfies a strong local $\beta$-shell condition, then $(\Omega, \ma)$
is doubling and supports a $1$-Poincar\'e inequality when $\alpha>-\beta$, and in addition from Lemma~\ref{lem:vonKoch-codim}
we know that $\nu=\mathcal{H}^Q\vert_K$ is $2+\alpha-Q$-codimension regular with respect to $\ma$, and
so by~\cite[Theorem~1.1]{Maly} the conclusions of Theorem~\ref{thm:main-trace} and Theorem~\ref{thm:main-extend}
hold for the von Koch domain and its boundary.

In light of the above discussion, 
it only remains to verify the strong local $\beta$-shell condition for $\Omega$ for the choice of
$0<\beta=\beta_0:= 2 - \tfrac{\log(4)}{\log(3)}=2-Q$.  

For each non-negative integer $n$, let $K_n$ denote the $n$-th 
iteration of the von Koch snowflake, so $K_n$ consists of $3 \cdot 4^n$ line segments of 
length $3^{-n}$.  Let $x \in \overline{\Omega}$, $0<r<\tfrac{1}{2}$, and choose a non-negative integer $k$ 
such that $3^{-k-1} \leq 2r < 3^{-k}$.  For $0<\rho<r$ choose a non-negative integer $j$ such that 
$3^{-k-j-1} \leq \rho < 3^{-k-j}$, and so $\rho \simeq 3^{-j} r$.  

There is a constant $M$, independent of $x$ and $r$, such that the number of the line 
segments in $K_{k+1}$ intersecting $B(x, 2r)$ is at most $M$, for the set 
of endpoints of the segments of $K_{k+1}$ are $3^{-k-1}$-separated and $r \simeq 3^{-k-1}$.  

For $m \in \N$, let $A_m$ be the bounded component of $\R^2 \setminus K_m$.  
Clearly $A_m\subset A_{m+1}$. Moreover,
\[
m(\{y\in\Omega\, :\, \delta_\Omega(y)\le \rho\}\cap B(x,r))
\simeq m(\{y\in A_{k+j+1}\, :\, \delta_\Omega(y)\le \rho\}\cap B(x,r)).
\]
Also, each line segment $L$  of length $3^{-k-1}$
that makes up the construction of $K_{k+1}$ is modified 
$j$ times to obtain the set $K_{k+j+1}$ by replacing $L$ with $4^j$ number of line segments,
each of length $3^{-j-k-1}$. If $\ell$ is one of these line segments, then
\[
m(\{y\in A_{k+j+1}\, :\, \dist(y,\ell)\le \delta_\Omega(y)\le \rho\})\simeq\rho^2,
\]
and therefore
\[
 m(\{y\in A_{k+j+1}\, :\, \delta_\Omega(y)\le \rho\}\cap B(x,r))\lesssim M\times 4^j\times \rho^2,
\]
with $\lesssim$ actually being $\simeq$ if $x\in K$. From the fact that $\rho\simeq 3^{-j}r$, it follows that
\[
m(\{y\in\Omega\, :\, \delta_\Omega(y)\le \rho\}\cap B(x,r))\lesssim \left(\frac{4}{9}\right)^j r^2
 \simeq  \left(\frac{4}{9}\right)^j m(B(x,r)\cap\Omega),
\]
again with $\lesssim$ actually being $\simeq$ if $x\in K$.
Set $\beta_0=2-\tfrac{\log(4)}{\log(3)}=2-Q$ and observe that $\rho/r\simeq 3^{-j}$. 
Then we have that
\[
\left(\frac{4}{9}\right)^j=\left(3^{-j}\right)^{\beta_0}\simeq \left(\frac{\rho}{r}\right)^{\beta_0}
\]
as desired, proving that the snowflake domain satisfies the strong $\beta_0$-shell condition.

\section{Trace of weighted Sobolev functions are in Besov spaces, or how the surrounding leaves a mark on subsets}

The goal of this section is to study trace of $N^{1,p}(B,\mu_\alpha)$ on the set $E$ and relate it to the
Besov classes $B^\theta_{p,p}(E,\nu)$ and prove Theorem~\ref{thm:main-trace}.
We recall the setting considered here (see the introduction for more on this).

With $0<Q<n$, let $E\subset \R^n$ be an Ahlfors $Q$-regular compact set
with $\diam(E)\le 1$. Let $B$ be a ball in $\R^n$ such that $E\subset\tfrac12 B$. We also assume that 
for each $\alpha\le 0$ there is a measure $\mu_\alpha$ on $B$ such that whenever $\alpha+n-Q>0$
and $x\in B$, and $0<r<2$ such that $r>\dist(x,E)/9$, the comparison $\mu_\alpha(B(x,r))\simeq r^{n+\alpha}$
holds. We also assume that the
ball $B$, equipped with the Euclidean metric $d$ and the measure $\mu_\alpha$,
is doubling and supports a $q$-Poincar\'e inequality for each $q>1$.

\begin{lemma}\label{lem:HLmaximal1}
Suppose that $B$ is a ball in $\R^n$ such that  $E\subset \tfrac12B$, where $E$ is compact and is 
Ahlfors $Q$-regular. Let $\mu_\alpha$ be as in~\eqref{eq:def-mu-alpha} such that $\nu=\mathcal{H}^Q\vert_E$ is
$\alpha+n-Q$-codimensional with respect to $\mu_\alpha$. 
Suppose that $\gamma>0$ such that $\gamma\ge\alpha+n-Q$.
Then there is a constant $C>0$ such that 
whenever $h\in L^1_{loc}(\R^n,\mu_\alpha)$, we have for all $t>0$,
\[
\nu(\{M_\gamma h>t\})\le \frac{C}{t}\int_{B}h\, d\mu_\alpha, 
\]
where $M_\gamma$ is the fractional Hardy-Littlewood maximal function operator given by
\[
M_\gamma h(x)= \sup_{\rad(B)\le 1, x\in B} \rad(B)^\gamma \vint_B h\, d\mu_\alpha.
\]
\end{lemma}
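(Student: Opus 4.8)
The plan is to run a standard weak-type $(1,1)$ argument via a Vitali-type covering, with the twist that the maximal function is averaged against $\mu_\alpha$ while the level set is measured with $\nu$; the bridge between the two is precisely the $(\alpha+n-Q)$-codimensionality, which I will use in its concrete form $\mu_\alpha(B(y,r))\simeq r^{n+\alpha}$ (for balls meeting $E$) together with the Ahlfors $Q$-regularity $\nu(B(z,r))\simeq r^Q$ for $z\in E$. I may assume without loss of generality that $h\ge 0$, replacing $h$ by $|h|$ and noting $M_\gamma h\le M_\gamma|h|$. Since $\nu=\mathcal{H}^Q\vert_E$ is carried by $E$, it suffices to bound $\nu$ of the set $A_t:=\{x\in E:\ M_\gamma h(x)>t\}$.

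First I would associate to each $x\in A_t$ a ball $B_x=B(y_x,r_x)$ with $x\in B_x$, $r_x\le 1$, and $r_x^\gamma\vint_{B_x}h\,d\mu_\alpha>t$. Since these balls have uniformly bounded radii, the basic $5r$-covering lemma yields a countable pairwise-disjoint subfamily $\{B_i\}=\{B(y_i,r_i)\}$ with $A_t\subset\bigcup_x B_x\subset\bigcup_i 5B_i$. Each $B_i$ contains a point $x_i\in E$, so $\dist(y_i,E)\le r_i<2$ and hence $r_i>\dist(y_i,E)/9$; the hypotheses then give $\mu_\alpha(B_i)\simeq r_i^{n+\alpha}$. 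Likewise $5B_i\subset B(x_i,10r_i)$ with $x_i\in E$ and $10r_i\le 10$, so Ahlfors $Q$-regularity gives $\nu(5B_i)\le\nu(B(x_i,10r_i))\simeq(10r_i)^Q\lesssim r_i^Q$. Rewriting the defining inequality for $B_i$ as $\int_{B_i}h\,d\mu_\alpha>t\,r_i^{-\gamma}\mu_\alpha(B_i)\simeq t\,r_i^{\,n+\alpha-\gamma}$, I obtain the crucial estimate $r_i^{\,n+\alpha-\gamma}\lesssim \tfrac1t\int_{B_i}h\,d\mu_\alpha$.

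The algebraic heart of the argument is then to convert $r_i^Q$ into $r_i^{\,n+\alpha-\gamma}$. Writing
\[
r_i^Q=r_i^{\,Q-(n+\alpha-\gamma)}\,r_i^{\,n+\alpha-\gamma}=r_i^{\,\gamma-(\alpha+n-Q)}\,r_i^{\,n+\alpha-\gamma},
\]
the hypothesis $\gamma\ge\alpha+n-Q$ makes the first exponent nonnegative, so $r_i\le 1$ forces $r_i^{\,\gamma-(\alpha+n-Q)}\le 1$ and hence $r_i^Q\le r_i^{\,n+\alpha-\gamma}\lesssim\tfrac1t\int_{B_i}h\,d\mu_\alpha$. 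Summing over the disjoint family and using that $\mu_\alpha$ is carried by $B$ gives
\[
\nu(A_t)\le\sum_i\nu(5B_i)\lesssim\sum_i r_i^Q\lesssim\frac1t\sum_i\int_{B_i}h\,d\mu_\alpha=\frac1t\int_{\bigcup_i B_i}h\,d\mu_\alpha\le\frac{C}{t}\int_B h\,d\mu_\alpha,
\]
which is the claimed bound.

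I expect no deep obstacle here; the argument is a routine covering estimate once the bookkeeping is set up. The one point demanding care is the verification that the covering balls $B_i$ and their dilates $5B_i$ genuinely fall within the regimes where the two comparisons apply, namely that each $B_i$ meets $E$ so that $r_i>\dist(y_i,E)/9$ holds and $\mu_\alpha(B_i)\simeq r_i^{n+\alpha}$, and that $5B_i$ stays within radius $10$ of a point of $E$ so that the $\nu$-regularity estimate is available. Ensuring radii remain $\le 1$ (hence dilated radii $\le 10$) throughout, and that $h\ge 0$ lets the disjoint-sum pass to $\int_B$, are the only details to watch.
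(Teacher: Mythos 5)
Your proposal is correct and follows essentially the same route as the paper's proof: cover the superlevel set in $E$ by balls witnessing the maximal inequality, pass to a disjoint subfamily via the $5r$-covering lemma, and use Ahlfors regularity together with the codimensionality relation and $r_i\le 1$, $\gamma\ge\alpha+n-Q$ to convert $r_i^Q$ into $\tfrac{1}{t}\int_{B_i}h\,d\mu_\alpha$ before summing over the disjoint balls. The exponent bookkeeping you carry out (via $r_i^{\gamma-(\alpha+n-Q)}\le 1$) is exactly the paper's step with $\eta=\gamma-(\alpha+n-Q)\ge 0$.
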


\begin{proof}
This is a variant of the standard proof, the variant being that the measure with respect to which the maximal
operator functions, $\mu_\alpha$, is not the same as the measure $\nu$ with respect to which the superlevel
sets are measured. For this reason we give the complete proof here.

Let $t>0$ and set $E_t:=\{x\in E\, :\, M_\gamma h(x)>t\}$. Then for each $x\in E_t$ there is a ball 
$B_x$ of radius $r_x>0$ such that $x\in B_x$ and 
\[
 r_x^\gamma  \vint_{B_x} h\, d\mu_\alpha>t.
\]
It follows that 
\[
\nu(E\cap B_x)\simeq r_x^Q<\frac{r_x^{\gamma+Q-(\alpha+n)}}{t}\int_{B_x}h\, d\mu_\alpha.
\]
Recalling that $\alpha+n>Q$, we set $\eta=\gamma-(\alpha+n-Q)$. Then $\eta\ge 0$.
The balls $B_x$, $x\in E_t$, cover $E_t$. Therefore, by the $5$-covering theorem (which is
applicable here to $E$ as $\nu$ is doubling on $E$), we obtain a pairwise disjoint countable
subfamily of balls $B_i$with radii $r_i$ such that $E_t\subset \bigcup_i 5B_i$. Then
\begin{align*}
\nu(E_t)\le \sum_i \nu(5B_i)\le C \sum_i r_i^Q \le \frac{C}{t} \sum_i r_i^\eta \int_{B_i}h\, d\mu_\alpha
  &\le \frac{C}{t} \sum_i \int_{B_i}h\, d\mu_\alpha\\
 &\le \frac{C}{t} \int_B h\, d\mu_\alpha,
\end{align*}
where we have used the facts that $r_i\le 1$, $\eta\ge 0$, and that the balls $B_i$ are pairwise disjoint.
\end{proof}

\begin{lemma}\label{lem:HLmaximal2}
Suppose that $0\le g\in L^p(B,\mu_\alpha)$ where $B$ is the ball as in Lemma~\ref{lem:HLmaximal1} and
$1<p<\infty$. Fix $1\le q<p$. Then 
\[
\int_E (M_\gamma(g^q))^{p/q}\, d\nu\le C\int_B g^p\, d\mu_\alpha.
\]
\end{lemma}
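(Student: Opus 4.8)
The plan is to recognize this as a strong-type $(s,s)$ estimate for the fractional maximal operator $M_\gamma$ acting between the mixed pair of measure spaces $(B,\mu_\alpha)$ and $(E,\nu)$, with $s:=p/q>1$, and to derive it from the weak-type bound of Lemma~\ref{lem:HLmaximal1} by running the Marcinkiewicz truncation argument by hand. First I would set $h:=g^q$, so that $\int_B g^p\,d\mu_\alpha=\int_B h^s\,d\mu_\alpha$ and the desired inequality becomes $\int_E (M_\gamma h)^s\,d\nu\le C\int_B h^s\,d\mu_\alpha$. Since $g\in L^p(B,\mu_\alpha)$ and $s>1$, Hölder's inequality together with the finiteness of $\mu_\alpha(B)$ gives $h\in L^1(B,\mu_\alpha)$, so $h\in L^1_{loc}(\R^n,\mu_\alpha)$ and Lemma~\ref{lem:HLmaximal1} is applicable.

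Two ingredients drive the argument. The first is the weak-type estimate $\nu(\{M_\gamma h>t\})\le \tfrac{C}{t}\int_B h\,d\mu_\alpha$ supplied directly by Lemma~\ref{lem:HLmaximal1}; this is precisely where the mismatch between the averaging measure $\mu_\alpha$ and the level-set measure $\nu$ is absorbed. The second is the endpoint bound $\|M_\gamma h\|_{L^\infty}\le \|h\|_{L^\infty}$: because every ball $B$ in the supremum defining $M_\gamma$ has radius at most $1$ and $\gamma>0$, the factor $\rad(B)^\gamma\le 1$, while $\vint_{B} h\,d\mu_\alpha\le \|h\|_{L^\infty}$, so $M_\gamma$ is trivially bounded on $L^\infty$.

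With these in hand I would run the standard truncation. For each $t>0$ write $h=h\,\mathbf 1_{\{h>t/2\}}+h\,\mathbf 1_{\{h\le t/2\}}$; the small part has $L^\infty$-norm at most $t/2$, so by the endpoint bound $M_\gamma(h\,\mathbf 1_{\{h\le t/2\}})\le t/2$, and subadditivity of $M_\gamma$ yields the inclusion $\{M_\gamma h>t\}\subset\{M_\gamma(h\,\mathbf 1_{\{h>t/2\}})>t/2\}$. Applying the weak-type bound to the big part and then the layer-cake formula,
\[
\int_E (M_\gamma h)^s\,d\nu=s\int_0^\infty t^{s-1}\,\nu(\{M_\gamma h>t\})\,dt\le 2Cs\int_0^\infty t^{s-2}\int_{\{h>t/2\}} h\,d\mu_\alpha\,dt.
\]
Fubini--Tonelli converts the right-hand side into $2Cs\int_B h(x)\big(\int_0^{2h(x)} t^{s-2}\,dt\big)\,d\mu_\alpha(x)$, and here the hypothesis $q<p$, that is $s>1$, is exactly what makes the inner integral $\int_0^{2h} t^{s-2}\,dt=(2h)^{s-1}/(s-1)$ converge. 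This gives $\int_E (M_\gamma h)^s\,d\nu\le \tfrac{2^s Cs}{s-1}\int_B h^s\,d\mu_\alpha$, and substituting back $h=g^q$ and $s=p/q$ yields the claim.

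I expect the only genuine subtlety to be the bookkeeping around the mixed-measure structure, but that is already handled cleanly inside Lemma~\ref{lem:HLmaximal1}; the remaining points — the $L^\infty$ endpoint bound (which uses $\rad\le 1$ and $\gamma>0$) and the convergence of the $t$-integral (which uses $q<p$) — are routine. In particular no interpolation theorem across distinct measure spaces need be invoked, since the Marcinkiewicz argument is reproduced directly.
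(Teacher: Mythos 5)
Your proposal is correct and follows essentially the same route as the paper: split $g^q$ at height $t/2$, use the trivial $L^\infty$ bound on the small part (which, as you note, relies on $\rad\le 1$ and $\gamma>0$), apply the weak-type bound of Lemma~\ref{lem:HLmaximal1} to the large part, and finish with the Cavalieri formula and Tonelli. The paper's final bookkeeping is slightly more roundabout (it applies Cavalieri again to the inner integral before recombining), whereas your direct Fubini computation of $\int_0^{2h}t^{s-2}\,dt$ is a cleaner way to land on the same constant; the mathematical content is identical.
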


\begin{proof}
Recall from the Cavalieri principle that 
\[
\int_E (M_\gamma(g^q))^{p/q}\, d\nu=\frac{p}{q} \int_0^\infty t^{\tfrac{p}{q}-1}\nu(\{Mg^q>t\})\, dt.
\]
For $t>0$, we can write $g^q=G_1+G_2$, where 
\[
G_1=g^q\chi_{\{g^q\le t/2\}},\qquad 
G_2=g^q \chi_{\{g^q>t/2\}}.
\]
Then 
\[
M_\gamma g^q\le M_\gamma G_1+M_\gamma G_2\le \frac{t}{2}+M_\gamma G_2.
\]
As $M_\gamma g^q(z)>t$ when $z\in E_t$, it follows that $\{M_\gamma g^q>t\}\subset \{M_\gamma G_2>t/2\}$. 
Hence by Lemma~\ref{lem:HLmaximal1},
\begin{align*}
\int_E(M_\gamma g^q)^{p/q}\, d\nu &\le\frac{p}{q}\int_0^\infty t^{\tfrac{p}{q}-1}\, \nu(\{M_\gamma G_2>t/2\})\, dt\\
  &\le C\int_0^\infty t^{\tfrac{p}{q}-2} \int_B G_2\, d\mu_\alpha \, dt\\
 & = C \int_0^\infty t^{\tfrac{p}{q}-2} \int_{B\cap\{g^q>t/2\}}g^q\, d\mu_\alpha\, dt\\
&= \int_0^\infty t^{\tfrac{p}{q}-2} \left[ \frac{t}{2} \mu_\alpha(B\cap\{g^q>t/2\})
     +\int_{t/2}^\infty \mu_\alpha(B\cap\{g^q>s\})\, ds\right]\, dt\\
&=C_1\int_0^\infty (t/2)^{\tfrac{p}{q}-1}\mu_\alpha(B\cap \{g^q>t/2\})\, dt\\
  &\qquad\qquad  +C\int_0^\infty\int_0^\infty t^{\tfrac{p}{q}-2}\chi_{(t/2,\infty)}(s)\mu_\alpha(B\cap\{g^q>s\})\, ds\, dt\\
&=C_2\int_B g^p\, d\mu_\alpha\\
&\qquad\qquad
+C\int_0^\infty\left(\int_0^\infty t^{\tfrac{p}{q}-2}\chi_{(0,2s)}(t)\, dt\right)\mu_\alpha(B\cap\{g^q>s\})\, ds\\
&=C_2\int_Bg^p\, d\mu_\alpha+C_3\int_0^\infty s^{\tfrac{p}{q}-1} \mu_\alpha(B\cap\{g^q>s\})\, ds,
\end{align*}
where we also used the Cavalieri principle and Tonelli's theorem in obtaining the last few lines above. By
the Cavalieri principle again, we obtain the desired result.
\end{proof}

Now we are ready to prove Theorem~\ref{thm:main-trace}. For the convenience of the reader, we state an expanded
version of this theorem now.

\begin{thm}\label{thm:Trace}
Let $E$ be an Ahlfors $Q$-regular compact subset of $\tfrac12B$ where $B$ is a ball in $\R^2$. Let $p>1$ and 
$0<\theta<1$ be such that $p\theta<1$. Let $\alpha\le 0$ be such that $\alpha+n-Q>0$ and 
$\theta<1-\tfrac{\alpha+n-Q}{p}$.
Then there exists $C\ge 1$ and a linear trace operator 
\[
T:N^{1,p}(B,\mu_\alpha)\to B^\theta_{p,p}(E,\nu)
\] 
with
\[
\Vert Tu\Vert_{B^\theta_{p,p}(E,\nu)}\le C\, \Vert |\nabla u| \Vert_{L^p(B,\mu_\alpha)}
\] 
and
\[
\Vert Tu\Vert_{L^p(E,\nu)}^p\le C \Vert u\Vert_{N^{1,p}(B,\mu_\alpha)}.
\]
Moreover, if $u\in N^{1,p}(B,\ma)$ is Lipschitz continuous in a neighborhood of $E$, then
$Tu=u\vert_E$.
\end{thm}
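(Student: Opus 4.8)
The plan is to build $T$ from balls-averages and to extract all three conclusions from one telescoping estimate combined with Lemmas~\ref{lem:HLmaximal1} and~\ref{lem:HLmaximal2}. First I would fix an auxiliary exponent $q$ with $\tfrac{\alpha+n-Q}{1-\theta}<q<p$; the hypothesis $\theta<1-\tfrac{\alpha+n-Q}{p}$ is exactly what makes this interval nonempty, and it automatically forces $q>1$ and $q>\alpha+n-Q$. Writing $\gamma=\alpha+n-Q$ and $a_k(x)=\big(\vint_{B(x,2^{-k})}|\nabla u|^q\,d\ma\big)^{1/q}$, the $q$-Poincar\'e inequality applied to consecutive balls gives $|u_{k+1}(x)-u_k(x)|\lesssim 2^{-k}a_k(x)$, where $u_k(x)=\vint_{B(x,2^{-k})}u\,d\ma$. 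Using $\ma(B(x,2^{-k}))\simeq 2^{-k(n+\alpha)}$ and the definition of $M_\gamma$ one gets $a_k(x)\le 2^{k\gamma/q}\,(M_\gamma(|\nabla u|^q)(x))^{1/q}$, so that $\sum_k 2^{-k}a_k(x)\lesssim (M_\gamma(|\nabla u|^q)(x))^{1/q}$, the geometric factor $\sum_k 2^{-k(1-\gamma/q)}$ converging because $\gamma<q$. By Lemma~\ref{lem:HLmaximal2} this majorant lies in $L^p(E,\nu)$, hence is finite $\nu$-a.e., so I may define $Tu(x)=\lim_k u_k(x)$ on the resulting full-measure set. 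Linearity is clear, and the bound $\|Tu\|_{L^p(E,\nu)}\lesssim\|u\|_{N^{1,p}(B,\ma)}$ follows by adjoining $|u_0(x)|$, whose $L^p(E,\nu)$-norm is controlled by $\int_B|u|^p\,d\ma$ since $\ma(B(x,1))\simeq 1$ and $\nu(E)<\infty$.

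The Lipschitz consistency is immediate from the construction: if $u$ has Lipschitz constant $L$ near $E$, then $|u_k(x)-u(x)|\le\vint_{B(x,2^{-k})}|u-u(x)|\,d\ma\le L\,2^{-k}\to 0$, so $Tu=u\vert_E$.

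The heart of the argument is the seminorm bound $\|Tu\|_{B^\theta_{p,p}(E,\nu)}^p\lesssim\int_B|\nabla u|^p\,d\ma$. For $x,y\in E$ with $d(x,y)=r$ I choose $j$ with $2^{-j-1}<r\le 2^{-j}$ and write
\[
Tu(x)-Tu(y)=\big(Tu(x)-u_j(x)\big)+\big(u_j(x)-u_j(y)\big)+\big(u_j(y)-Tu(y)\big).
\]
The two outer telescoping tails are dominated by $\sum_{k\ge j}2^{-k}a_k(x)$ and $\sum_{k\ge j}2^{-k}a_k(y)$, while the middle bridge term is handled by the Poincar\'e inequality on $B(x,2^{-j+1})\supset B(x,2^{-j})\cup B(y,2^{-j})$ (valid since $d(x,y)\le 2^{-j}$, using doubling to pass between $x$- and $y$-centred averages), giving $\lesssim 2^{-j}a_{j-1}(x)$. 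Thus $|Tu(x)-Tu(y)|\lesssim\sum_{k\ge j-1}2^{-k}(a_k(x)+a_k(y))$. Inserting $\nu(B(x,r))\simeq r^Q$, decomposing the inner integral over the annuli $A_j(x)=\{2^{-j-1}<d(x,\cdot)\le 2^{-j}\}$ with $\nu(A_j(x))\lesssim 2^{-jQ}$, using $(a_k(x)+a_k(y))^p\lesssim a_k(x)^p+a_k(y)^p$, and applying Fubini to reduce the $y$-contribution to the $x$-contribution, I am left to bound $\sum_j 2^{j\theta p}\int_E\big(\sum_{k\ge j-1}2^{-k}a_k\big)^p\,d\nu$. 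A H\"older split $2^{-k}=2^{-k\sigma}2^{-k(1-\sigma)}$ with $0<\sigma<1-\gamma/q$, together with the key estimate $\int_E a_k^p\,d\nu\le 2^{k\gamma p/q}\int_E(M_\gamma(|\nabla u|^q))^{p/q}\,d\nu\lesssim 2^{k\gamma p/q}\int_B|\nabla u|^p\,d\ma$ from Lemma~\ref{lem:HLmaximal2}, collapses the inner $k$-sum (convergent as $\sigma<1-\gamma/q$) and leaves an outer $j$-sum carrying the exponent $(\theta-1+\gamma/q)p$, which is independent of $\sigma$.

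The main obstacle is this final double integral, and specifically the simultaneous calibration of the three constraints on $q$: the small-scale telescoping needs $q>\gamma$, the maximal-function estimate of Lemma~\ref{lem:HLmaximal2} needs $q<p$, and convergence of the outer $j$-sum needs $(\theta-1+\gamma/q)p<0$, i.e.\ $q>\tfrac{\gamma}{1-\theta}$. All three can be met at once precisely when $\theta<1-\tfrac{\alpha+n-Q}{p}$, which is the source of the endpoint non-sharpness remarked after Theorem~\ref{thm:main-trace}; the remaining standing restriction $p\theta<1$ does not appear to intervene in the gradient estimate carried out above.
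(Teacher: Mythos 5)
Your argument is correct and follows essentially the same route as the paper's proof: a dyadic telescoping chain controlled by a $q$-Poincar\'e inequality with $q\in(\gamma/(1-\theta),p)$, $\gamma=\alpha+n-Q$, converted into the fractional maximal operator $M_\gamma$ and closed via Lemmas~\ref{lem:HLmaximal1} and~\ref{lem:HLmaximal2}; the only organizational differences are that you define $Tu$ directly as a $\nu$-a.e.\ limit of ball averages rather than proving the estimates for Lipschitz functions and invoking their density in $N^{1,p}(B,\ma)$, and that you run a H\"older split in $k$ where the paper first sums the geometric series $\sum_k 2^{-k(1-\gamma/q)}$ to get the pointwise bound $|u(x)-u(y)|\lesssim d(x,y)^{1-\gamma/q}\bigl(M_\gamma g^q(x)^{1/q}+M_\gamma g^q(y)^{1/q}\bigr)$. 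One small inaccuracy: the condition $q>\gamma/(1-\theta)$ does not ``automatically force $q>1$'' (the lower endpoint can lie below $1$ when $\gamma<1-\theta$); you should take $q\in\bigl(\max\{1,\gamma/(1-\theta)\},p\bigr)$, which is still nonempty since $p>1$ and $p>\gamma/(1-\theta)$, so nothing in the argument breaks.
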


Note that if $p\theta<1$, then we can always choose $\alpha\le 0$ satisfying the hypotheses of the above theorem.
Moreover, if we only know that there is a fixed $\alpha>Q-n$ such that $\mu_\alpha$ is doubling and supports
a $p$-Poincar\'e inequality for some $p>1$, then the conclusion of the above theorem holds true as long
as there exists $1\le q<p$ such that $\mu_\alpha$ supports a $q$-Poincar\'e inequality and
$\alpha+n-Q<q(1-\theta)$. The support of a $q$-Poincar\'e inequality for some $1\le q<p$ is guaranteed
by the self-improvement property of Poincar\'e inequality, see~\cite{KZ, HKSTbook}.

\begin{proof}
We first prove the above claim for Lipschitz functions in $N^{1,p}(B,\mu_\alpha)$. 
As $\mu_\alpha$ is doubling and supports a $p$-Poincar\'e inequality, we know that Lipschitz functions
are dense in $N^{1,p}(B,\mu_\alpha)$, and hence we get the estimates for all functions in 
$N^{1,p}(B,\mu_\alpha)$. So, in the following we will assume that $u$ is Lipschitz continuous. It follows
that every point in $B$ is a $\mu_\alpha$-Lebesgue point of $u$. We denote $g_u:=|\nabla u|$.

Let $x,y\in E$ and set $B_0=B(x,2d(x,y))$, and for
positive integers $k$ we set $B_k=B(x,2^{1-k} d(x,y))$ and $B_{-k}=B(y,2^{1-k}d(x,y))$. 
We also set $r_k=\rad(B_k)$ for $k\in\Z$. Then
by the following standard telescoping argument and by the $q$-Poincar\'e inequality, we obtain
\begin{align*}
|u(y)-u(x)| &\le \sum_{k\in\Z}|u_{B_k}-u_{B_{k+1}}|\\
  &\le C \sum_{k\in\Z} \vint_{2B_k}|u-u_{2B_k}|\, d\mu_\alpha\\
&\le C \sum_{k\in\Z} r_k\left(\vint_{2B_k} g_u^q\, d\mu_\alpha\right)^{1/q}\\
&= C \sum_{k\in\Z} r_k^{1-\gamma/q}\left( r_k^\gamma\vint_{2B_k} g_u^q\, d\mu_\alpha\right)^{1/q}\\
&= C\, d(x,y)^{1-\gamma/q} 
   \sum_{k\in\Z} 2^{-|k|(1-\gamma/q)}\left( r_k^\gamma\vint_{2B_k} g_u^q\, d\mu_\alpha\right)^{1/q}.
\end{align*}
By assumption, we have $p\theta<1$. Hence we can choose $\alpha$ with $Q-2<\alpha\le 0$ such that
$p\theta<p-(\alpha+n-Q)$.
Since $\alpha+n-Q>0$, we can choose $\gamma=\alpha+n-Q$ in the above. Then the condition on
$\alpha$ as described above reads as $p\theta<p-\gamma$, and so $0<\theta<1-\gamma/p$. Hence we can
choose $1<q<p$ such that $\theta<1-\gamma/q$, whence by this choice of $q$ we have that 
$1-\gamma/q>0$. It follows that $\sum_{k\in\Z}2^{-|k|(1-\gamma/q)}<\infty$, and so
\[
|u(y)-u(x)|\le C\, d(x,y)^{1-\gamma/q}\left[M_\gamma g_u^q(x)^{1/q}+M_\gamma g_u^q(y)^{1/q}\right].
\]
Hence, for $0<\theta<1$, we obtain
\begin{align*}
\frac{|u(y)-u(x)|^p}{d(x,y)^{\theta p}\nu(B(x,d(x,y)))}
 &\simeq \frac{|u(y)-u(x)|^p}{d(x,y)^{Q+\theta p}}\\
&\le C\, d(x,y)^{p-\tfrac{\gamma}{q}p-\theta p-Q}\, \left[M_\gamma g_u^q(x)^{p/q}+M_\gamma g_u^q(y)^{p/q}\right].
\end{align*}
Therefore,
\begin{align*}
\Vert u\Vert_{B^\theta_{p,p}(E,\nu)}^p
 &\le C \int_E\int_E d(x,y)^{p-\tfrac{\gamma}{q}p-\theta p-Q}\, \left[M_\gamma g_u^q(x)^{p/q}+M_\gamma g_u^q(y)^{p/q}\right]\, d\nu(x)\, d\nu(y)\\
&= C\, [I_1+I_2],
\end{align*}
where
\begin{align*}
I_1&= \int_E\int_E d(x,y)^{p-\tfrac{\gamma}{q}p-\theta p-Q}\, M_\gamma g_u^q(x)^{p/q}\, d\nu(x)\, d\nu(y)\\
I_2&= \int_E\int_E d(x,y)^{p-\tfrac{\gamma}{q}p-\theta p-Q}\, M_\gamma g_u^q(y)^{p/q}\, d\nu(x)\, d\nu(y).
\end{align*}
Thanks to Tonelli's theorem, any estimate we obtain for $I_1$ is valid also for $I_2$, so we consider $I_1$ only next.
Note that for $x\in E$,
\begin{align*}
\int_E d(x,y)^{p-\tfrac{\gamma}{q}p-\theta p-Q}\, d\nu(y)
&=\sum_{j=0}^\infty \int_{B(x,2^{-j})\setminus B(x,2^{-j-1})}d(x,y)^{p-\tfrac{\gamma}{q}p-\theta p-Q}\, d\nu(y)\\
  &\le C \sum_{j=0}^\infty 2^{-j[p-\tfrac{\gamma}{q}p-\theta p-Q]}2^{-jQ}
 = C \sum_{j=0}^\infty 2^{-jp[1-\tfrac{\gamma}{q}-\theta]}.
\end{align*}
As we had chosen $1<q<p$ such that $\theta<1-\gamma/q$, it follows that 
the above series is finite.  
Then we get  from Lemma~\ref{lem:HLmaximal2} that
\[
I_1\le C\int_EM_\gamma g_u^q(x)^{p/q}\, d\nu(x)\le C\int_Bg_u^p\, d\mu_\alpha.
\]
It then follows that 
\[
\Vert u\Vert_{B^\theta_{p,p}(E,\nu)}\le C\, \Vert g_u\Vert_{L^p(B,\mu_\alpha)}=C\, \Vert |\nabla u|\Vert_{L^p(B,\mu_\alpha)}.
\]

In the above computation, if we fix $x\in E$ to be a $\mu_\alpha$--Lebesgue point of $u$
and consider only the balls $B_k=B(x,2^{-k})$ for $k\ge 0$, then we obtain
\begin{align*}
|u(x)|\le |u(x)-u_{B_0}|+|u_{B_0}|
 &\le \sum_{k=0}^\infty |u_{B_k}-u_{B_{k+1}}|+C\, \vint_B|u|\, d\mu_\alpha\\
&\le \sum_{k=0}^\infty r_k\left(\vint_{2B_k}g_u^q\, d\mu_\alpha\right)^{1/q}
     +C\left(\vint_B|u|^p\, d\mu_\alpha\right)^{1/p}\\
&\le C\sum_{k=0}^\infty r_k^{1-\gamma/q}M_\gamma g_u^q(x)^{1/q}
    +C\left(\vint_B|u|^p\, d\mu_\alpha\right)^{1/p}\\
&= C M_\gamma g_u^q(x)^{1/q}
    +C\left(\vint_B|u|^p\, d\mu_\alpha\right)^{1/p}.
\end{align*}
Therefore
\[
|u(x)|^p\le C M_\gamma g_u^q(x)^{p/q} + C\vint_B|u|^p\, d\mu_\alpha.
\]
Integrating the above over $E$ with respect to the measure $\nu$ and applying Lemma~\ref{lem:HLmaximal2}
we obtain
\[
\Vert u\Vert_{L^p(E,\nu)}^p\le C\int_B g_u^p\, d\mu_\alpha+C_B \int_B|u|^p\, d\mu_\alpha,
\]
from which the claim now follows.
\end{proof}

\vskip .3cm

\section{Extension of Besov functions are in Sobolev spaces, or how subsets influence their surroundings}

In this section we show that we can extend functions from the Besov class for $E$ to Newtonian class for 
$(\R^2,\mu_\alpha)$ by proving Theorem~\ref{thm:main-extend}.

Since $E$ is compact and $E\subset\tfrac12B$, we can construct a Whitney cover $B_{i,j}$, 
$i\in\N$ and $j=1,\cdots, M_i$, of $B\setminus E$. 
Such a cover is described in~\cite[Section~2]{HM} and in~\cite[Proposition~4.1.15]{HKSTbook}, 
where the construction did not need the open set $\Omega$ to
be connected, and so their construction is available in our setting as well.
We can ensure
that with $B_{i,j}=B(x_{i,j},r_{i,j})$ we have $r_{i,j}=\dist(x_{i,j},E)=2^{-i}$ and that
for each $T\ge 1$ there exists $N_T\in\N$ such that for each $i\in\N$,
\begin{equation}\label{eq:bdd-overlap}
\sum_{j=1}^{M_i}\chi_{TB_{i,j}}\le N_T.
\end{equation}
Let $\pip_{i,j}$ be a Lipschitz partition of unity subordinate to the cover $B_{i,j}$, that is, 
each $\pip_{i,j}$ is $2^i C$--Lipschitz continuous, $0\le \pip_{i,j}\le 1$, $\text{supp}(\pip_{i,j})\subset 2B_{i,j}$
and
\[
\sum_{i,j}\pip_{i,j}=\chi_{B\setminus E}.
\]
Moreover, there exist $N_1, N_2\in\N$ such that if $2B_{i,j}$ and $2B_{m,n}$ intersect, then $|i-m|<N_1$
and there are at most $N_2$ balls $B_{m,n}$ satisfying the above when $i,j$ are fixed.  For the convenience of
the reader, we state an expanded version of Theorem~\ref{thm:main-extend} below.

\begin{thm}\label{thm:extend}
With $E$, $B$, $\nu=\mathcal{H}^Q\vert_E$ and $0<Q<2$ as above, let $p>1$ and $0<\theta<1$. We fix $\alpha\le 0$ such that
$\alpha+n-Q>0$ and $\theta\ge 1-\tfrac{\alpha+n-Q}{p}$. Then there is a constant $C\ge 1$ and a
linear extension operator
\[
S:B^\theta_{p,p}(E,\nu)\to N^{1,p}(B,\mu_\alpha)
\]
such that 
\[
\int_B |\nabla Su|^p\, d\mu_\alpha\le C\, \Vert u\Vert_{B^\theta_{p,p}(E,\nu)}^p, \qquad 
\int_B|Su|^p\, d\mu_\alpha\le C\, \int_E|u|^p\, d\nu.
\]
Moreover, if $u$ is $L$-Lipschitz on $E$, then $Su$ is $CL$-Lipschitz on $B$.
\end{thm}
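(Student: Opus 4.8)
The plan is to build $S$ as a Whitney extension with $\nu$-averaged boundary data. For each Whitney ball $B_{i,j}=B(x_{i,j},2^{-i})$ I fix a point $\hat x_{i,j}\in E$ with $d(x_{i,j},\hat x_{i,j})=\dist(x_{i,j},E)=2^{-i}$, write $B^*_{i,j}=B(\hat x_{i,j},2^{-i})$, and set $a_{i,j}=\vint_{E\cap B^*_{i,j}}u\,d\nu$. I then define $Su=u$ on $E$ and $Su=\sum_{i,j}\pip_{i,j}a_{i,j}$ on $B\setminus E$; linearity of $S$ is immediate. Since $\sum_{i,j}\pip_{i,j}=\chi_{B\setminus E}$ and the sum is locally finite with the overlap bounds $N_1,N_2$, the function $Su$ is locally Lipschitz on $B\setminus E$. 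Differentiating on $2B_{i,j}$ and using $\sum_{m,n}\nabla\pip_{m,n}=0$ there, one obtains the pointwise bound $|\nabla Su|\lesssim 2^i\sum_{(m,n)}|a_{i,j}-a_{m,n}|$, the sum taken over the at most $N_2$ neighbors with $2B_{m,n}\cap 2B_{i,j}\neq\emptyset$, for which $|i-m|<N_1$ and hence $2^m\simeq 2^i$.

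The heart of the argument is the gradient estimate. For neighboring indices the centers satisfy $d(\hat x_{i,j},\hat x_{m,n})\lesssim 2^{-i}$, so $E\cap B^*_{i,j}$ and $E\cap B^*_{m,n}$ lie in a common $E$-ball $\tilde B$ of radius $\simeq 2^{-i}$ with $\nu(\tilde B)\simeq 2^{-iQ}$; comparing both averages to $u_{\tilde B}$ and applying Jensen gives the oscillation bound $|a_{i,j}-a_{m,n}|^p\lesssim \nu(\tilde B)^{-2}\iint_{\tilde B\times\tilde B}|u(y)-u(z)|^p\,d\nu(y)\,d\nu(z)$. Combining this with $\mu_\alpha(2B_{i,j})\simeq 2^{-i(n+\alpha)}$ and summing $\int_{2B_{i,j}}|\nabla Su|^p\,d\mu_\alpha$ over $j$ at fixed scale $i$ via bounded overlap yields
\[
\int_B|\nabla Su|^p\,d\mu_\alpha\lesssim \sum_i 2^{i\lambda}\int_E\int_{B(y,C2^{-i})}|u(y)-u(z)|^p\,d\nu(z)\,d\nu(y),\qquad \lambda:=p-(n+\alpha)+2Q.
\]
I then split the inner integral into the dyadic annuli $A_\ell(y)=\{z:2^{-\ell-1}<d(y,z)\le 2^{-\ell}\}$, exchange the order of summation, and on $A_\ell(y)$ use $d(y,z)\simeq 2^{-\ell}$ to write $|u(y)-u(z)|^p\simeq 2^{-\ell(\theta p+Q)}\tfrac{|u(y)-u(z)|^p}{d(y,z)^{\theta p+Q}}$. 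The scale factor becomes $\big(\sum_{i\lesssim\ell}2^{i\lambda}\big)2^{-\ell(\theta p+Q)}$, which is bounded uniformly in $\ell$: when $\lambda>0$ the geometric sum is $\simeq 2^{\ell\lambda}$ and the product equals $2^{\ell\kappa}$ with $\kappa=\lambda-\theta p-Q=p(1-\theta)-(\alpha+n-Q)\le 0$, exactly by the hypothesis $\theta\ge 1-\tfrac{\alpha+n-Q}{p}$; when $\lambda\le 0$ the sum stays bounded (or grows linearly) while $2^{-\ell(\theta p+Q)}$ decays. Summing the remaining annular integrals reconstitutes $\|u\|_{B^\theta_{p,p}(E,\nu)}^p$. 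It is this annular decomposition that makes the borderline exponent admissible, avoiding the logarithmic loss one gets from estimating $|u(y)-u(z)|$ only at the scale of $\tilde B$. The $L^p$ estimate is softer: by Jensen $|a_{i,j}|^p\le \vint_{E\cap B^*_{i,j}}|u|^p\,d\nu$, so $\int_{2B_{i,j}}|Su|^p\,d\mu_\alpha\lesssim 2^{-i(\alpha+n-Q)}\int_{E\cap B^*_{i,j}}|u|^p\,d\nu$, and summing over $j$ and then over $i$ produces the convergent factor $\sum_i 2^{-i(\alpha+n-Q)}$, using $\alpha+n-Q>0$.

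The step I expect to be the main obstacle is verifying that $Su\in N^{1,p}(B,\mu_\alpha)$ with the function above serving as a genuine $p$-weak upper gradient across $E$, together with the identification $Su\vert_E=u$ $\nu$-a.e. Off $E$ the pointwise gradient is a legitimate upper gradient since $Su$ is locally Lipschitz there; the difficulty is the behaviour along curves meeting $E$. I plan to show that at $\nu$-a.e.\ Lebesgue point $z\in E$ of $u$ one has $Su(x)\to u(z)$ as $x\to z$, so $Su$ is continuous up to such points, then exploit $\mu_\alpha(E)=0$ and combine the off-$E$ gradient with a telescoping estimate through the Whitney annuli surrounding $z$ to produce an $L^p(\mu_\alpha)$ upper gradient of $Su$ on all of $B$ in the doubling, $p$-Poincar\'e framework.

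Finally, for the Lipschitz assertion, if $u$ is $L$-Lipschitz on $E$ then $u$ oscillates by $\lesssim L2^{-i}$ on each $\tilde B$, so $|a_{i,j}-a_{m,n}|\lesssim L2^{-i}$, and the bound of the first paragraph gives $|\nabla Su|\lesssim L$ on $B\setminus E$. Together with the continuity of $Su$ up to $E$ (where $Su=u$), a standard connecting-curve argument—bounding $|Su(\xi)-Su(\zeta)|$ by passing through nearest boundary points when the segment $[\xi,\zeta]$ runs close to $E$—upgrades this to $Su$ being $CL$-Lipschitz on $B$.
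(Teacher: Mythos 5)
Your construction and estimates follow essentially the same route as the paper: a Whitney extension by $\nu$-averages against a Lipschitz partition of unity, the gradient bound via oscillations of neighboring averages over a common boundary ball, the codimension relation $\mu_\alpha(B_{i,j})\simeq 2^{-i(n+\alpha)}$, and a sum over scales in which the hypothesis $\theta\ge 1-\tfrac{\alpha+n-Q}{p}$ makes the scale factor bounded; your exponent bookkeeping ($\lambda=p-(n+\alpha)+2Q$ and $\kappa=p(1-\theta)-(\alpha+n-Q)\le 0$) is correct. Where you diverge is in how the Besov seminorm is reassembled: you perform a direct dyadic annular decomposition of the inner integral and exchange the order of summation, whereas the paper quotes the discretized equivalent norm
\[
\Vert u\Vert_{B^\theta_{p,p}(E,\nu)}^p\simeq \sum_{n=0}^\infty\int_E\vint_{B(x,2^{-n})}\frac{|u(x)-u(y)|^p}{2^{-n\theta p}}\, d\nu(y)\, d\nu(x)
\]
from \cite{GKS}; these amount to the same computation, and your version is more self-contained. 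The step you flag as the main obstacle --- verifying that $Su\in N^{1,p}(B,\mu_\alpha)$ with the pointwise gradient acting as a genuine $p$-weak upper gradient across $E$ for a general $u\in B^\theta_{p,p}(E,\nu)$ --- is real, and your proposed Lebesgue-point/telescoping argument is left as a plan rather than carried out. The paper sidesteps it entirely: it first takes $u$ Lipschitz (Lipschitz functions are dense in $B^\theta_{p,p}(E,\nu)$ by \cite{B2S}), shows that $Su$ is then globally $CL$-Lipschitz on $B$ --- including the boundary estimate $|Su(y)-u(x)|\le CL\, d(x,y)$ for $x\in E$ and $y$ in a Whitney ball --- so membership in $N^{1,p}(B,\mu_\alpha)$ is automatic, and then extends $S$ to all of $B^\theta_{p,p}(E,\nu)$ by linearity, the norm bounds, and completeness of the target space. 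You should close your argument the same way; it also yields the Lipschitz assertion of the theorem directly, without the separate connecting-curve argument you sketch at the end.
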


Note that in the trace theorem, Theorem~\ref{thm:Trace}, we were not able to gain control of 
$\int_E|Tu|^p\, d\nu$ solely in terms of $\int_B|u|^p\, d\mu_\alpha$. 
The above extension theorem however does allow us these separate controls.

\begin{proof}
From~\cite[Proposition~13.4]{B2S} we know that Lipschitz functions are dense in $B^\theta_{p,p}(E,\nu)$ 
for each $0<\theta<1$ and $p\ge 1$. We fix our attention on $p>1$ and $0<\theta<1$. We will
first extend Lipschitz functions in $B^\theta_{p,p}(E,\nu)$ to $N^{1,p}(B,\mu_\alpha)$ and use this
to conclude that every function in $B^\theta_{p,p}(E,\nu)$ has an extension lying in $N^{1,p}(B,\ma)$.
To this end, let $u\in B^\theta_{p,p}(E,\nu)$ be Lipschitz continuous, and for $x\in B\setminus E$ we set
\[
Su(x)=\sum_{i,j} u_{2B_{i,j}}\, \pip_{i,j}(x),
\]
where $u_{2B_{i,j}}:=\vint_{2B_{i,j}}u\, d\nu$.
We extend $Su$ to $E$ by setting $Su(x)=u(x)$ when $x\in E$.
If $x\in E$ and $y\in B_{i_0,j_0}$, then 
\begin{align*}
|Su(y)-u(x)|&=\bigg\vert \sum_{i,j}[u_{2B_{i,j}}-u(x)]\pip_{i,j}(y)\bigg\vert\\
  &\le \sum_{i,j}\pip_{i,j}(y)\, \vint_{2B_{i,j}}|u(w)-u(x)|d\nu(w)\\
  &\le \sum_{i,j}\pip_{i,j}(y)\, L\, 2^{1-i}\ 
  \le CL\, d(y,x). 
\end{align*}
It follows that for each $x\in E$, we have 
\[
\limsup_{r\to 0^+}\vint_{B(x,r)\setminus E}|Su(y)-u(x)|^q\, d\mu_\alpha(y)=0
\]
for all $1\le q<\infty$.

If $x,y\in B_{i_0,j_0}$, then by the properties of the Whitney cover listed above,
\begin{align}
|Su(y)-Su(x)|&=\bigg\vert\sum_{i,j}[u_{2B_{i,j}}-u_{2B_{i_0,j_0}}][\pip_{i,j}(y)-\pip_{i,j}(x)]\bigg\vert\notag \\
 \le &\frac{C\, d(y,x)}{r_{i_0}}\sum_{i,j; 2B_{i,j}\cap B_{i_0,j_0}\ne \emptyset}|u_{2B_{i,j}}-u_{2B_{i_0,j_0}}|\notag \\
 \le &\frac{C\, d(y,x)}{r_{i_0}}\sum_{i,j; 2B_{i,j}\cap B_{i_0,j_0}\ne \emptyset}
   \vint_{2B_{i,j}}\vint_{2B_{i_0,j_0}}|u(w)-u(v)|\, d\nu(v)\, d\nu(w)\notag\\
\le &\frac{C\, d(y,x)}{r_{i_0}}\vint_{CB_{i_0,j_0}}\vint_{CB_{i_0,j_0}}|u(w)-u(v)|\, d\nu(v)\, d\nu(w)\label{eq:control1}\\
\le &\frac{C\, d(y,x)}{r_{i_0}} 2CL\, r_{i_0} 
\le C\, L\, d(x,y). \label{eq:control2}
\end{align}
It follows that $Su$ is $CL$-Lipschitz on $B$ and hence is in $N^{1,p}(B,\mu_\alpha)$. It now only remains to obtain
norm bounds.

Recall from~\cite[Theorem~5.2 and equation~(5.1)]{GKS} that 
\begin{equation}\label{eq:Besov-alt-norm}
\Vert u\Vert_{B^\theta_{p,p}(E,\nu)}^p
  \simeq \sum_{n=0}^\infty \int_E \vint_{B(x,2^{-n})} \frac{|u(x)-u(y)|^p}{2^{-n\theta p}}\, d\nu(y)\, d\nu(x).
\end{equation}
As $E$ is Ahlfors $Q$ regular for some $Q<2$, it follows that $\mathcal{H}^2(E)=0$, and hence $\mu_\alpha(E)=0$. 
Let $z\in B_{i_0,j_0}$. Setting $x=z$ and letting $y\to z$, by applying the H\"older inequality to~\eqref{eq:control1} 
we have that
\begin{align*}
\Lip Su(z)^p&=\left(\limsup_{y\to z}\frac{|Su(y)-Su(z)|}{d(y,z)}\right)^p\\
 &\le \frac{C}{r_{i_0}^p}  \vint_{CB_{i_0,j_0}}\vint_{CB_{i_0,j_0}}|u(w)-u(v)|^p\, d\nu(v)\, d\nu(w)\\
 &= \frac{C}{2^{-i_0(1-\theta)p}}\vint_{CB_{i_0,j_0}}\vint_{CB_{i_0,j_0}}\frac{|u(w)-u(v)|^p}{2^{-i_0\theta p}}\, d\nu(w)\, d\nu(v)\\
 &\le \frac{C}{2^{-i_0(Q+(1-\theta)p)}}\int_{CB_{i_0,j_0}}\vint_{B(v,2^{k_0-i_0})}\frac{|u(w)-u(v)|^p}{2^{-i_0\theta p}}\, d\nu(w)\, d\nu(v),
\end{align*}
where $k_0$ is the smallest positive integer such that $2^{k_0}\ge 2C$; note that $k_0$ is independent of $i_0,j_0, v$.
Here we also used the fact that $r_{i_0}\simeq \dist(z,E)\simeq 2^{-i_0}$. Integrating the above over $B_{i_0,j_0}$, we obtain
\begin{align*}
\int_{B_{i_0,j_0}}&\Lip Su(z)^p\, d\mu_\alpha(z)=\int_{B_{i_0,j_0}}|\nabla Su(z)|^p\, d\ma(z)\\
  &\le C\, 2^{-i_0(\alpha +2-Q-(1-\theta)p)} \int_{CB_{i_0,j_0}}\vint_{B(v,2^{k_0-i_0})}\frac{|u(w)-u(v)|^p}{2^{-i_0\theta p}}\, d\nu(w)\, d\nu(v).
\end{align*}
Summing the above over $j_0=1,\cdots, M_{i_0}$ and noting by~\eqref{eq:bdd-overlap}
that $\sum_{j=1}^{M_{j_0}}\chi_{CB_{i_0,j}}\le N_C$ with $E\subset\bigcup_{j=1}^{M_{i_0}}CB_{i_0,j}$, 
and then summing over $i_0$, we obtain
\begin{align*}
\int_B&\Lip Su(z)^p\, d\mu_\alpha(z)\\
&\le C \sum_{i_0=0}^\infty 2^{-i_0(\alpha +2-Q-(1-\theta)p)} 
  \int_E \vint_{B(v,2^{k_0-i_0})}\frac{|u(w)-u(v)|^p}{2^{-i_0\theta p}}\, d\nu(w)\, d\nu(v)\\
 &\le \sum_{i_0=0}^\infty \int_E \vint_{B(v,2^{k_0-i_0})}\frac{|u(w)-u(v)|^p}{2^{-i_0\theta p}}\, d\nu(w)\, d\nu(v)
\end{align*}
provided that $\alpha+n-Q-(1-\theta)p\ge 0$. So if $\alpha\le 0$ is chosen such that $\alpha+n-Q>0$ and 
\[
\theta\ge 1-\frac{\alpha+n-Q}{p},
\]
then by~\eqref{eq:Besov-alt-norm} we have that
\[
\int_B|\nabla Su|^p\, d\ma=\int_B \Lip Su^p\, d\mu_\alpha\le C\, \Vert u\Vert_{B^\theta_{p,p}(E,\nu)}^p.
\] 

To complete the argument, we next obtain control of $\Vert Su\Vert_{L^p(B,\mu_\alpha)}$. For $x\in B_{i_0,j_0}$,
\begin{align*}
|Su(x)|&=\bigg\vert\sum_{i,j: 2B_{i,j}\cap B_{i_0,j_0}\ne \emptyset}\pip_{i,j}(x)\vint_{2B_{i,j}}u(y)\, d\nu(y)\bigg\vert\\
&\le C \vint_{C_0B_{i_0,j_0}}|u(y)|\, d\nu(y)\\
&\le C \left(\vint_{C_0B_{i_0,j_0}}|u(y)|^p\, d\nu(y)\right)^{1/p}.
\end{align*}
Therefore
\begin{align*}
\int_{B_{i_0,j_0}}|Su(x)|^p\, d\mu_\alpha(x)
  &\le C\, \mu_\alpha(B_{i_0,j_0}) \vint_{C_0B_{i_0,j_0}}|u(y)|^p\, d\nu(y)\\
  &\le C\, 2^{-i_0(\alpha+n-Q)}\int_{C_0B_{i_0,j_0}}|u(y)|^p\, d\nu(y).
\end{align*}
As before, summing over $j_0=1,\cdots, M_{i_0}$ and then over $i_0$ gives
\[
\int_B|Su(x)|^p\, d\mu_\alpha(x)\le C \sum_{i_0=0}^\infty 2^{-i_0(\alpha+n-Q)} \int_E |u(y)|^p\, d\nu(y).
\]
As $\alpha+n-Q>0$, it follows that
\[
 \int_B|Su(x)|^p\, d\mu_\alpha(x)\le C \int_E |u(y)|^p\, d\nu(y)
\]
as desired.
\end{proof}

%
%
%

\vskip .5cm

\noindent Address:\\

\vskip .2cm

Department of Mathematical Sciences, University of Cincinnati, P.O. Box~210025, Cincinnati, OH 45221-0025, USA.\\

\noindent E-mail:  J.L.:~{\tt jlindquistmath@gmail.com}, \ N.S.:~{\tt shanmun@uc.edu} 

\end{document}